\newtheorem{theorem}{Theorem}[section]
\newtheorem{definition}[theorem]{Definition}
\newtheorem{lemma}[theorem]{Lemma}
\newtheorem{proposition}[theorem]{Proposition}
\newtheorem{corollary}[theorem]{Corollary}
\newtheorem{example}[theorem]{Example}
\newtheorem{remark}[theorem]{Remark}
\newtheorem*{acknow}{Acknowledgements}
\newtheorem*{question}{Question}
\long\def\symbolfootnote[#1]#2{\begingroup%
\def\thefootnote{\fnsymbol{footnote}}\footnote[#1]{#2}\endgroup}
\def\@cite#1#2{{\m@th\upshape\bfseries%
[{#1\if@tempswa{\m@th\upshape\mdseries, #2}\fi}]}} \makeatother
\newcommand{\bbA}{{\mathbb{A}}}
\newcommand{\bbC}{{\mathbb{C}}}
\newcommand{\bbD}{{\mathbb{D}}}
\newcommand{\bbN}{{\mathbb{N}}}
\newcommand{\bbQ}{{\mathbb{Q}}}
\newcommand{\bbR}{{\mathbb{R}}}
\newcommand{\bbT}{{\mathbb{T}}}
\newcommand{\bbZ}{{\mathbb{Z}}}
\newcommand{\A}{{\mathcal{A}}}
\newcommand{\B}{{\mathcal{B}}}
\newcommand{\C}{{\mathcal{C}}}
\newcommand{\E}{{\mathcal{E}}}
\newcommand{\F}{{\mathcal{F}}}
\newcommand{\I}{{\mathcal{I}}}
\newcommand{\J}{{\mathcal{J}}}
\renewcommand{\L}{{\mathcal{L}}}
\newcommand{\M}{{\mathcal{M}}}
\newcommand{\N}{{\mathcal{N}}}
\newcommand{\R}{{\mathcal{R}}}
\newcommand{\T}{{\mathcal{T}}}
\newcommand{\U}{{\mathcal{U}}}
\newcommand{\fA}{{\mathfrak{A}}}
\newcommand{\fC}{{\mathfrak{C}}}
\newcommand{\fD}{{\mathfrak{D}}}
\def\ga{\alpha}
\def\om{\omega}
\def\gd{\delta}
\def\gi{\iota}
\def\gm{\gamma}
\def\eps{\varepsilon}
\newcommand{\sca}[1]{\left\langle#1\right\rangle} 
\newcommand{\nor}[1]{\left\Vert #1\right\Vert}    
\newcommand\diag{\mathop{\rm diag}}
\newcommand\Span{\mathop{\rm span}}
\newcommand{\ca}{\mathrm{C}^*}
\newcommand\id{\mathop{\rm id}}
\newcommand\oC{\mathrm{C}}
\newcommand\0{\vec{\mathtt{0}}}
\newcommand\tx{\vec{\mathtt{x}}}
\newcommand\ty{\vec{\mathtt{y}}}
\newcommand\tX{\widetilde{X}}
\newcommand\tphi{\tilde{\phi}}
\begin{document}

\title[Semicrossed products of $\ca$-algebras]
{Semicrossed products of $\ca$-algebras\\ and their $\ca$-envelopes}

\author{Evgenios~T.A.~Kakariadis}
\address{Pure Math.\ Dept.\\U. Waterloo\\Waterloo, ON\;
N2L--3G1\\CANADA}
\email{ekakaria@uwaterloo.ca}

\subjclass[2010]{47L25, 47L55}
\keywords{semicrossed product, crossed product, C*-envelope}
\thanks{The author was supported by the ``Herakleitus II'' program
(co-financed by the European Social Fund, the European Union and
Greece).}

\maketitle

\begin{abstract}
Let $\C$ be a $\ca$-algebra and $\ga\colon\C\rightarrow \C$ a unital $*$- endomorphism. There is a natural way to construct operator algebras which are called semicrossed products, using a convolution induced by the action of $\ga$ on $\C$. We show that the $\ca$-envelope of a semicrossed product is (a full corner of) a crossed product. As a consequence, we get that, when $\ga$ is $*$-injective, the semicrossed products are completely isometrically isomorphic and share the same $\ca$-envelope, the crossed product $\C_\infty \rtimes_{\ga_\infty} \bbZ$.

We show that minimality of the dynamical system $(\C,\ga)$ is equivalent to non-existence of non-trivial Fourier invariant ideals in the $\ca$-envelope. We get sharper results for commutative dynamical systems.
\end{abstract}

\section{Introduction and preliminaries}

The purpose of this paper is to give a clear picture of how non-selfadjoint operator algebras arise by a dynamical system consisting of a $\ca$-algebra and a $*$-homomorphism, i.e. a semicrossed product. Our examination focusses on finding the $\ca$-envelope of the semicrossed products subject to \emph{a covariance relation}, for which we give a full answer in every possible case. We see the $\ca$-envelope as the appropriate candidate for a $\ca$-algebra that inherits some of the properties of the dynamical system has and we show how this is justified for commutative systems. For completeness we have included in Section \ref{left} an application of the joint work \cite{KakKat11} with Elias Katsoulis. We prove some of the results of \cite{KakKat11} needed here, in an ad-hoc manner, avoiding using the language of $\ca$-correspondences, hence preserving self-containment of the paper. (The careful reader will notice though that the current paper and \cite{KakKat11} were conducted in parallel.)

Given a dynamical system $(\C,\ga)$ there are various ways of considering universal $\ca$-algebras over collections of pairs $(\pi,V)$, so that $(H,\pi)$ is a representation of $\C$, $V$ an operator in $\B(H)$ and ``a covariance relation'' holds. Some of the forms the covariance relation may have are
\begin{align*}
&(1)\quad \pi(\ga(c))=V^*\pi(c)V, &\text{ (``implements'') }\\
&(2)\quad V\pi(\ga(c))=\pi(c)V, &\text{ (``intertwines'') }\\
&(3)\quad V\pi(\ga(c))V^*=\pi(c), &\text{ (``undoes'') }
\end{align*}
and some possible choices for $V$ is to be a contraction, an isometry, a co-isometry or a unitary\symbolfootnote[1]{\ The presentation we give here follows the list presented by M. Lamoureux in his talk in GPOTS (1999), and I thank A. Katavolos for bringing this to my attention.}. For example, when $\ga$ is a $*$-automorphism and $V$ is considered a unitary, then all three relations are equivalent and the universal $\ca$-algebra is nothing else but the usual crossed product $\C \rtimes_\ga \bbZ$. Also, when $\C$ is unital, $\ga$ is injective and we consider relation (1) for co-isometries $V$, then we get the crossed product by an endomorphism of Stacey \cite{Sta93} (we assume that $\ga$ is non-unital in that case, otherwise relation (1) would force $\pi$ to be chosen non-degenerate, hence $V$ to be unitary). Moreover, it seems that some relations are absurd; for example relation (1) cannot hold for arbitrary contractions, since this would imply that
\[
V \pi(x^*)V^* V \pi(x) V^*
 =
\pi(\ga(x^*))\pi(\ga(x))
 =
\pi(\ga(x^*x))
 =
V \pi(x^*x) V^*,
\]
for all $x\in \C$, which triggers more assumptions on $V$. Indeed, for $x=e_\C$, i.e. the identity of $\C$, we obtain that $V\pi(e_\C)V^*$ is a projection that commutes with $\pi(\C)$ (note that we do not know a priori that $\pi$ is non-degenerate).

It is clear that when $V$ is a co-isometry then $[(1) \Rightarrow (2) \Rightarrow (3)]$, whereas when $V$ is an isometry the reverse implications hold. Hence, we can say that relation (2) is the linking condition; for this reason we choose relation (2) to play the role of the covariance relation throughout this paper. Going even further we examine the ``adjoint'' of this relation, i.e.
\begin{align*}
&(2)^\# \quad \pi(\ga(c))V=V\pi(c), &\text{ (``intertwines'') }.
\end{align*}

\emph{A} semicrossed product is a universal non-selfadjoint operator algebra relative to covariant pairs $(\pi,V)$ that satisfy $(2)$ or $(2)^\#$ (see Definition \ref{main def}), and has been under investigation by various authors \cite{AlaP, Arv, ArvJ, DeAPet, DKconj, HadH, MM}, starting with the work of Arveson \cite{Arv} in the late sixties. They are a rather powerful tool used for the investigation of the dynamical system as they provide a complete invariant for outer conjugacy in various cases \cite{DavKak12, DKconj, DavKat08-2}. Many times, the authors make assumptions such as, commutativity of $\C$, or injectivity of $\ga$. In the same direction with \cite{KakKat10}, the main objective of this paper is to identify the $\ca$-envelope of the semicrossed products (i.e. the smaller $\ca$-cover) in the general case. Towards this we prove that the $\ca$-envelope of any semicrossed product is (a full corner of) a crossed product. Moreover, we show that when $\ga$ is injective, then the $\ca$-envelope is the usual crossed product that arises from the direct limit dynamical system $(\C_\infty,\ga_\infty)$ of Stacey \cite{Sta93}. These extend the results of Peters \cite{Petarx}, whose work was our initial point.

Likewise to the multiple choices for a semicrossed product, there is a variety of C*-algebra crossed products by endomorphisms introduced as possible generalizations of the crossed product, initiated by the work of Cuntz \cite{Cun77} (for example see \cite{BroRaeVit09, Exel03, KakKat11, Lac00, Lar10, Mur02, Pas, Sta93}). Notable examples are Stacey's crossed product $\C\rtimes^1_\ga \bbN$ \cite[Definition 3.1]{Sta93}, Exel's crossed product $\C\rtimes_{\ga,\L} \bbN$ \cite[Definition 3.7]{Exel03} and $\U(\C,\ga)$ \cite[Definitions 4.4]{Exel03}. Moreover, a connection between Stacey's and Exel's crossed products was established by an Huef and Raeburn \cite{HueRae11, HueRae11-2}, after the present paper appeared in public as a preprint (see also \cite{KakPet13}). Unlikely to \cite{Exel03} we do not assume the existence of a transfer operator, as we wanted to keep low on the assumptions that determine the classes of the representations we examine. Moreover, in many of the above cases the dynamical systems are non-unital, which case is not examined here. Nevertheless, in our unital case there is indeed a connection between $\C\rtimes_\ga^1 \bbN$, $\U(\C,\ga)$ and the $\ca$-envelope of a semicrossed product, as shown in Remark \ref{rem exe}. In addition, we remark that generalized crossed products appear as enveloping $\ca$-algebras relative to $\ca$-representations, whereas the first objective of this paper is to identify the $\ca$-envelope of enveloping non-selfadjoint operator algebras relative to contractive representations of non-involutive Banach algebras (see also the remarks at the end of this Section).

Adding to the literature of the generalized $\ca$-crossed product, we propose the $\ca$-envelope as one more possible candidate of a $\ca$-algebra connected to a dynamical system. There is a great number of results showing that the usual crossed product captures some of the properties of an automorphic dynamical system. Since the $\ca$-envelope is (a full corner of) a crossed product it seems reasonable to try and ``pull back'' these results to non-surjective dynamical systems. Towards this, we obtain results concerning minimality. In particular, for the commutative case, we prove that minimality of the $\ca$-envelope is equivalent to the compact Hausdorff space being infinite and the dynamical system being minimal. As a consequence, one gets that all the ideals in any $\ca$-cover of the semicrossed product are boundary with respect to the semicrossed product.

\subsection*{Structure of the paper}

In section 2 we give the main definitions for the operator algebras that we call \emph{the semicrossed products of $(\C,\ga)$}, by using left (resp. right) covariant pairs. Also, we develop an argument of duality that enables us to examine just the left or the right case. Thus can drop down to four operator algebras $\fA(\C,\ga,\text{contr})_l$, $\fA(\C,\ga,\text{is})_l$, $\fA(\C,\ga,\text{co-is})_l$ and $\fA(\C,\ga,\text{un})_l$.

In Section \ref{right} we show that $\fA(\C,\ga,\text{co-is})_l$ and $\fA(\C,\ga,\text{un})_l$ are completely isometrically isomorphic and that their $\ca$-envelope is a crossed product. We mention that the semicrossed product $\fA(\C,\ga,\text{is})_l$ is Peters' semicrossed product introduced in \cite{Pet84}. In that paper the author was asking about the case of the right covariance relation; this is exactly the semicrossed product $\fA(\C,\ga,\text{is})_r$ that is explored here. Theorem \ref{theorem C^*-envelope right isometric} generalizes \cite[Theorem 4]{Petarx}.

In Section \ref{left} we examine $\fA(\C,\ga,\text{contr})_l$ and $\fA(\C,\ga,\text{is})_l$ which are completelly isometrically isomorphic. In a joint work with Elias Katsoulis \cite{KakKat11} we have shown that the $\ca$-envelope of $\fA(\C,\ga,\text{is})_l$ is a full corner of a crossed product, using the language of $\ca$-correspondences. Here, we give the proof by using ad-hoc versions of the arguments of \cite{KakKat11} that preserves the self-containment of this paper.

In Section \ref{overview} we present some remarks that arise naturally from the work of the previous two sections. For example we show that the semicrossed products we construct with respect to left covariant pairs are completely isometrically isomorphic (in a natural way) if and only if the $*$-endomorphism $\ga$ is injective.

In Section \ref{minimality} we show that the $\ca$-envelope has no \emph{Fourier-invariant ideals} (see Definition \ref{Fourier defn}) if and only if the dynamical system is \emph{minimal} (see Definition \ref{defn minimal}). In this case, all four semicrossed products are completely isometrically isomorphic and their $\ca$-envelope is the crossed product $\C_\infty \rtimes_{\ga_\infty}\bbZ$. Moreover, when $\C$ is a commutative $\ca$-algebra $\oC(X)$ over a compact Haudorff space $X$, the $\ca$-envelope is simple if, and only if, the dynamical system is minimal and $X$ is infinite.

Finally we mention that all the dynamical systems $(\C,\ga)$ are considered unital, meaning that $\C$ has a unit $e$ and $\ga(e)=e$. Therefore, the operator algebras examined here are always unital. Analogous results may be obtained (with similar methods) for non-unital cases, when the homomorphism $\ga$ is non-degenerate.

\subsection*{Notation}

In what follows $\bbZ_+=\{0,1,2,\dots\}$ and we use the symbol $s$ (resp. $u$) for the unilateral (resp. bilateral) shift on $\ell^2(\bbZ_+)$ (resp. $\ell^2(\bbZ)$), given by $s(e_n)=e_{n+1}$ (resp. $u(e_n)=e_{n+1}$). Before we begin, we present some basic constructions that are associated to a pair $(\C,\ga)$.

For such a pair we can define the \emph{radical} ideal $\R_\ga =\overline{\cup_n \ker\ga^n}$; it is immediate that an element $c \in \C$ is in $\R_\ga$ if, and only if, $\lim_n\ga^n(c)=0$. Thus $\ga(\R_\ga)\subseteq \R_\ga$, $\ga^{-1}(\R_\ga)=\R_\ga$ and an injective $*$-homomorphism is defined by
\begin{align*}
\dot{\ga}\colon \C/\R_\ga \rightarrow \C/\R_\ga: c+\R_\ga \mapsto \ga(c)+\R_\ga.
\end{align*}
Note that $\R_\ga=(0)$ if, and only if, $\ga$ is injective.

There is a well known direct limit process introduced by Stacey \cite{Sta93} that associates an automorphic system $(\C_\infty,\ga_\infty)$ to any pair $(\C,\ga)$, defined by
\begin{align*}
\xymatrix{
  \C \ar[r]^{\ga} \ar[d]^\ga &
  \C \ar[r]^{\ga} \ar[d]^\ga &
  \C \ar[r]^{\ga} \ar[d]^\ga &
  \cdots \ar[r] &
  \C_\infty \ar[d]^{\ga_\infty} \\
  \C \ar[r]^\ga &
  \C \ar[r]^\ga &
  \C \ar[r]^\ga &
  \cdots \ar[r] &
  \C_\infty
}
\end{align*}
It is easy to see that $\ga_\infty$ is \emph{always} a $*$-automorphism. However $\C$ is not always embedded in $\C_\infty$, thus $\ga_\infty$ does not always extend $\ga$. Indeed, if $\gi_n\colon \C_n \rightarrow \C_\infty$
are the induced $*$-homomorphisms, then $\ker\gi_n=\R_\ga$; thus $\C/\R_\ga$ embeds in $\C_\infty$ and $\ga_\infty$ extends $\dot{\ga}$.

Note that the direct limit $\C_\infty$ may be the trivial $\ca$-algebra $\bbC$ (for example let $(\C,\ga)$ be as in Remark \ref{trivial} below).
Nevertheless, when $\ga\colon \C \rightarrow \C$ is injective, then $\R_\ga=(0)$ and the pair $(\C_\infty,\ga_\infty)$ is indeed an extension of $(\C,\ga)$.

\begin{remark}\label{trivial}
\textup{There are cases where the ideal $\R_\ga$ may be ``too large'', in the sense that $\C/ \R_\ga \simeq \bbC$. For example, let $X=\bbR^+ \cup \{+\infty\}$ be the one-point compactification of $\bbR^+$ and let $\C=\oC(X)$. Then $\C$ is the unitization of $C_0(\bbR^+)$. Define the map $\phi\colon X\rightarrow X$, by $\phi(x)=x+1$ and $\phi(\infty)=\infty$ and consider the $*$-homomorphism $\ga\colon \C \rightarrow \C$ given by $\ga(f)=f\circ \phi$.}
\end{remark}

We will also need the construction of the enveloping operator algebra of a unital Banach algebra (see \cite[2.4.6 and 2.4.7]{BleLeM04}). In a few words, let $\B$ be a unital Banach algebra and let $\F$ be a collection of (possibly degenerate) contractive representations $(H_\pi,\pi)$ of $\B$, where $H_\pi$ is a Hilbert space with dimension less or equal to an ordinal $\beta$, such that $\beta^{\aleph_0} = \beta$. For any integer $\nu \geq 1$ and $[F_{ij}] \in \M_\nu(\B)$, let the seminorms
\begin{align*}
\om_\nu([F_{ij}])
 =
\sup \left\{ \nor{[\pi(F_{ij})]}_{\M_\nu(B(H_\pi))}: (H_\pi,\pi)\in \F \right\}.
\end{align*}
If $\N=\ker \om_1$, then we can define the induced $\nu$-norms on the quotient $\B/\N$; we let $\nor{F+ \N}_\infty:= \om_1(F)$. The \emph{enveloping operator algebra $\fA(\B,\F)$ of $\B$ with respect to the collection $\F$}, is the completion of the quotient $\B/\N$ with respect to the norm $\nor{\cdot}_\infty$, and has the following (universal) property: there is a unital completely contractive homomorphism $\gi\colon \B \rightarrow \fA(\B,\F)$, whose range is dense, such that for any contractive representation $(H_\pi,\pi) \in \F$ there exists a (necessarily unique) completely contractive homomorphism $\widetilde{\pi}\colon \fA(\B,\F) \rightarrow \B(H_\pi)$ such that $\widetilde{\pi}\circ \gi= \pi$ (for details see \cite[Section 1.2]{KakPet13}).

In order to construct the operator algebras that we will call \emph{semicrossed products of $(\C,\ga)$}, we first define the following $\ell^1$-Banach algebras. Note that this is the (non-selfadjoint) analogue of the procedure that produces the usual $\ca$-crossed product. First we equip the linear space $c_{00}(\bbZ_+)\odot \C$ (the algebraic tensor product of linear spaces) with the left multiplication
\begin{align*}
(\gd_n\otimes c) \ast_l (\gd_m\otimes y)
 =
\gd_{n+m}\otimes (a^m(c)\cdot y),
\end{align*}
and we denote by $\ell^1(\bbZ_+,\C,\ga)_l$ the Banach algebra that is obtained by completing with respect to the $|\cdot|_1$-norm
\begin{align*}
\left|\sum_{n=0}^{k}\gd_n\otimes c_n\right|_1
 =
\sum_{n=0}^{k} \|c_n\|_\C.
\end{align*}
In an analogous way, we equip the linear space $\C \odot c_{00}(\bbZ_+)$ with the right multiplication
\begin{align*}
(c\otimes \gd_n) \ast_r (y\otimes \gd_m) = (c\cdot a^n(y)) \otimes
 \gd_{n+m},
 \end{align*}
and we denote by $\ell^1(\bbZ_+,\C,\ga)_r$ the Banach algebra obtained.

Note that $\ell^1(\bbZ_+,\C,\ga)_l$ and $\ell^1(\bbZ_+,\C,\ga)_r$ are isometrically isomorphic as Banach spaces but not as Banach algebras. Also, if $e$ is the unit of $\C$ then $\gd_0\otimes e$ is the unit for both algebras.

As we will see in Definition \ref{main def}, the semicrossed products of a pair $(\C,\ga)$ are the enveloping operator algebras of the $\ell^1$-Banach algebras $\ell^1(\bbZ_+,\C,\ga)_l$ and $\ell^1(\bbZ_+,\C,\ga)_r$, with respect to various collections of contractive representations.

We use the following notation concerning crossed products (see, for example, \cite{Wil07}). Let $\ga\colon \C \rightarrow \C$ be a $*$-isomorphism of the $\ca$-algebra $\C$ and fix a faithful representation $(H_0,\pi)$ of $\C$. For the Hilbert space $H=H_0\otimes\ell^2(\bbZ)$, we define $\widehat{\pi}\colon \C\rightarrow \B(H)$, so that $\widehat{\pi}(c)= \diag\{\pi(\ga^n(c)): n\in\bbZ\}$ and $U=1_{H_0}\otimes u$, where $u$ is the forward bilateral shift on $\ell^2(\bbZ)$, such that $\widehat{\pi}(c)U = U\widehat{\pi}(\ga(c))$, for all $c\in \C$. The representation $(U\times \widehat{\pi})$ of the crossed product $\C\rtimes_\ga \bbZ$ that integrates the pair $(\widehat{\pi},U)$ is called \emph{the left regular representation}. Analogously, the representation $(\widehat{\pi} \times U^*)$ that integrates the pair $(\widehat{\pi},U^*)$ is called \emph{the right regular representation}. It is known that the $\ca$-algebras that are generated by the ranges of $(U\times \widehat{\pi})$ and $(\widehat{\pi}\times U^*)$ are both $*$-isomorphic to the crossed product $\C\rtimes_\ga \bbZ$.

Finally, recall that a $\ca$-algebra $\fC$ is said to be a \textit{$\ca$-cover} for an operator algebra $\fA$, provided that there is a completely isometric homomorphism $\gi\colon \fA \rightarrow \fC$ and $\gi(\fA)$ generates $\fC$ as a $\ca$-algebra, i.e. $\fC = \ca(\gi(\fA))$. An ideal $\J \subseteq \fC$, with the property that the restriction of the natural projection $\fC\rightarrow \fC/\J$ on $\fA$ is a complete isometry, is called a \textit{boundary ideal}. \textit{The \v{S}ilov ideal $\J_\fA$ of $\fA$ in $\fC$} is the largest boundary ideal and \textit{the $\ca$-envelope $\ca_{\text{env}}(\fA)$ of $\fA$} is then the quotient $\fC/\J_\fA$. An equivalent way to define the $\ca$-envelope of an operator algebra is through the following universal property: for any $\ca$-cover $(\C,\gi)$ of $\fA$ there is a $*$-epimorphism $\Phi\colon \C \rightarrow \ca_{env}(\fA)$, such that $\Phi(\gi(a))=a$ for any $a\in \fA$.

The existence of the $\ca$-envelope was initiated by Arveson in late 60's. Hamana \cite{Ham} gave the first general proof and later Dritschel and McCullough \cite{DrMc} gave an independent proof. These independent proofs were later simplified by the author \cite{Kak11-2} and Arveson \cite{Ar06}, respectively.

The universal property of the $\ca$-envelope suggests that it is the smaller $\ca$-cover of $\fA$. Based on that one could ``rename'' the $\ca$-envelope as \textit{$\ca$-minimal}. Even though this would make the significant difference between envelopping $\ca$-algebras and the $\ca$-envelope more apparent, it seems quite hard at this point to make a complete change on a terminology that has been established for more than 40 years.

\section{Definitions}\label{definitions}

Before we give the main definitions we have to take a closer look at the representations of the $\ell^1$-Banach algebras. Let us consider first the case of $\ell^1(\bbZ_+,\C,\ga)_l$. Let $\rho\colon \ell^1(\bbZ_+,\C,\ga)_l\rightarrow \B(H)$ be a $|\cdot|_1$-contractive representation (denoted simply by $|\cdot|_1$-representation, from now on); then the restriction of $(H,\rho)$ to the $\ca$-algebra $\C$ defines a contractive homomorphism (thus a $*$-representation) of $\C$. Also, let $V=\rho(\gd_1\otimes e)$; then $V$ is a contraction in $\B(H)$ and the definition of the left multiplication gives
\begin{align*}
\pi(c)V
& =
\rho(\gd_0\otimes c) \rho(\gd_1\otimes e)
  =
\rho\big(\gd_1\otimes (\ga(c)\cdot e)\big) \\
& =
\rho(\gd_1\otimes e)\rho(\gd_0 \otimes \ga(c))
  =
V \pi(\ga(c)),
\end{align*}
for all $c\in\C$. Conversely, let $(H,\pi)$ be a $*$-representation of $\C$ and $V$ be a contraction in $H$ such that the following equality holds
\begin{equation}\label{eq:left covariance}
\pi(c)V = V\pi(\ga(c)),\ c\in \C.
\end{equation}
We define the map
\begin{align*}
(V\times \pi)\colon \ell^1(\bbZ_+,\C,\ga)_l \rightarrow \B(H): \sum_{n=0}^\infty \gd_n\otimes c_n \mapsto \sum_{n=0}^\infty V^n\pi(c_n).
\end{align*}
It is easy to check that $(V\times \pi)$ is a $|\cdot|_1$-representation of $\ell^1(\bbZ_+,\C,\ga)_l$.

Hence, $(H,\rho)$ is a $|\cdot|_1$-representation of $\ell^1(\bbZ_+,\C,\ga)_l$ if, and only if, $\rho=(V\times \pi)$ for a pair $(\pi,V)$, where $(H,\pi)$ is a $*$-representation of $\C$, $V$ is a contraction in $H$ and equality (\ref{eq:left covariance}) holds. Such pairs $(\pi,V)$ are called \emph{left covariant contractive, isometric, co-isometric or unitary} if $V\in \B(H)$ is a contraction, an isometry, a co-isometry or a unitary, respectively. We refer to equality (\ref{eq:left covariance}) as \emph{the left covariance relation}.

Analogously, there exists a bijection between the representations of the algebra $\ell^1(\bbZ_+,\C,\ga)_r$ and the \emph{right covariant} pairs $(\pi,V)$, i.e. pairs satisfying \emph{the right covariance relation} $V\pi(c)=\pi(\ga(c))V$, $c\in \C$. In this case we write
\begin{align*}
(\pi \times V)(c \otimes \gd_n):= \pi(c)V^n.
\end{align*}

\begin{definition}\label{main def}
\textup{Let $\C$ be a unital $\ca$-algebra and $\ga\colon \C\rightarrow \C$ a unital $*$-homomorphism. We define the following enveloping operator algebras of $\ell^1(\bbZ_+,\C,\ga)_l$,\\
$\bullet$ $\fA(\C,\ga,\text{contr})_l$: with respect to the collection of representations
\begin{align*}
\{(V\times \pi)\colon (\pi,V) \text{ is a left covariant contractive pair}\},
\end{align*}
$\bullet$ $\fA(\C,\ga,\text{is})_l$: with respect to the collection of representations
\begin{align*}
\{(V\times \pi): (\pi,V) \text{ is a left covariant isometric pair}\},
\end{align*}
$\bullet$ $\fA(\C,\ga,\text{co-is})_l$: with respect to the collection of representations
\begin{align*}
\{(V\times \pi): (\pi,V) \text{ is a left covariant co-isometric pair}\},
\end{align*}
$\bullet$ $\fA(\C,\ga,\text{un})_l$: with respect to the collection of representations
\begin{align*}
\{(V\times \pi): (\pi,V) \text{ is a left covariant unitary pair}\}.
\end{align*}
}
\textup{For the right-covariant case, we define the enveloping operator algebras $\fA(\C,\ga,\text{contr})_r$, $\fA(\C,\ga,\text{is})_r$, $\fA(\C,\ga,\text{co-is})_r$ and $\fA(\C,\ga,\text{un})_r$ of $\ell^1(\bbZ_+,\C,\ga)_r$, analogously. }
\end{definition}

As we will see, the above cases sum up to 2 different classes, the left isometric and the right isometric case. This is done by using a two-step method. First, we use a duality between the left and the right cases, which we establish below. Secondly we use dilation theory \cite{MuhSol06, Sta93} to identify completely isometrically the operator algebras, as we explain in the appropriate sections.

\begin{remark}
\textup{There is a bijection between the left covariant pairs and the right covariant pairs. More precisely $(\pi,V)$ is a left covariant contractive (resp. isometric, co-isometric, unitary) pair if, and only if, $(\pi,V^*)$ is a right covariant contractive (resp. co-isometric, isometric, unitary) pair. Indeed, taking adjoints in the relation (\ref{eq:left covariance}) implies $V^*\pi(c^*)=\pi(\ga(c^*))V^*$, hence $V^*\pi(c)=\pi(\ga(c))V^*$, for any $c\in \C$, since $\C$ is selfadjoint.}

\textup{However, the map $c\mapsto \pi(c^*)$ is not a $*$-homomorphism and we cannot pass from the representations of $\ell^1(\bbZ_+,\C,\ga)_l$ to the representations of $\ell^1(\bbZ_+,\C,\ga)_r$ simply by taking adjoints. Nevertheless, the following trick, establishes a duality that simplifies our proofs.}
\end{remark}

For convenience, we use the symbol $\F_{t,l}$, $t=1,2,3,4$, for the collection of the left covariant contractive, isometric, co-isometric and unitary pairs, respectively. Also, we use the symbol $\F_{t,r}$, $t=1,2,3,4$, for the right covariant contractive,  co-isometric, isometric and unitary pairs, respectively. We define the antilinear bijection
\begin{align*}
^\#\colon c_{00}(\bbZ_+)\odot \C \rightarrow \C \odot c_{00}(\bbZ_+),
\end{align*}
so that $(\gd_n\otimes c)^\#= c^*\otimes \gd_n$, for every $c\in \C$. Abusing notation we write $(F^\#)^\#=F$, for every $F\in c_{00}(\bbZ_+) \odot \C$. This bijection is an isometry and extends to an isometry of $\ell^1(\bbZ_+,\C,\ga,)_l$ onto $\ell^1(\bbZ_+,\C,\ga,)_r$, for which we use the same symbol. Moreover, for every representation $(H,\rho)$ of $\ell^1(\bbZ_+,\C,\ga)_r$ we define
\begin{align*}
\rho^\#\colon \ell^1(\bbZ_+,\C,\ga)_l\rightarrow \B(H): \rho^\#(F)=\rho(F^\#)^*.
\end{align*}
It is routine to see that $(H,\rho^\#)$ is a $|\cdot|_1$-representation of $\ell^1(\bbZ_+,\C,\ga)_l$, and that $\rho\in \F_{t,r}$ if and only if $\rho^\#\in \F_{t,l}$.

\begin{lemma}\label{duality}
Let $\rho\in \F_{t,r}$, $t=1,2,3,4$ and $[F_{i,j}]\in \M_\nu(\ell^1(\bbZ_+,\C,\ga)_r)$, $\nu\geq 1$. Then $\rho^\# \in \F_{t,l}$ and
\begin{align*}
\nor{[\rho(F_{i,j})]}_{\B(H^\nu)}
  =
\nor{[\rho^\#(F_{i,j}^\#)]}_{\B(H^\nu)}.
\end{align*}
\end{lemma}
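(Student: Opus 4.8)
The plan is to unwind the definitions of $\rho^\#$ and of the matrix norm, and reduce the claimed equality to the fact that the map $^\#$ on matrix entries is compatible with taking adjoints at the level of $\B(H^\nu)$. First I would fix the notation: for a matrix $[F_{i,j}]\in \M_\nu(\ell^1(\bbZ_+,\C,\ga)_r)$, the operator $[\rho(F_{i,j})]$ acts on $H^\nu=H\otimes \bbC^\nu$, and similarly $[\rho^\#(F_{i,j}^\#)]$ acts on $H^\nu$. The first assertion, that $\rho^\#\in \F_{t,l}$, is already recorded in the paragraph immediately preceding the lemma (it is noted there that $\rho\in\F_{t,r}$ if and only if $\rho^\#\in\F_{t,l}$), so I would simply cite that and concentrate on the norm identity.

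The key computation is purely pointwise in the indices. By definition $\rho^\#(F_{i,j}^\#)=\rho\big((F_{i,j}^\#)^\#\big)^* = \rho(F_{i,j})^*$, using the involutivity $(F^\#)^\#=F$ of the antilinear bijection $^\#$. Hence the matrix $[\rho^\#(F_{i,j}^\#)]$ has as its $(i,j)$-entry exactly $\rho(F_{i,j})^*$. The only subtlety is that one must be careful about whether the transpose-conjugate of a matrix of operators amounts to the entrywise adjoint composed with a transposition of indices; the cleanest route is to observe that $[\rho(F_{i,j})]^*$, the Hilbert-space adjoint of the operator $[\rho(F_{i,j})]$ on $H^\nu$, equals the matrix whose $(i,j)$-entry is $\rho(F_{j,i})^*$, i.e. the conjugate-transpose. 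Since an operator and its adjoint have the same norm, $\nor{[\rho(F_{i,j})]}=\nor{[\rho(F_{i,j})]^*}=\nor{[\rho(F_{j,i})^*]}$, and after the relabeling $i\leftrightarrow j$ (which does not change the norm of the matrix) this is precisely $\nor{[\rho^\#(F_{i,j}^\#)]}$.

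Concretely, the steps in order are: (1) invoke the preceding remark to get $\rho^\#\in\F_{t,l}$; (2) compute $\rho^\#(F_{i,j}^\#)=\rho(F_{i,j})^*$ from the definition of $\rho^\#$ and $(F^\#)^\#=F$; (3) recall that taking the operator adjoint of $[\rho(F_{i,j})]\in\B(H^\nu)=\M_\nu(\B(H))$ produces the matrix with $(i,j)$-entry $\rho(F_{j,i})^*$, and that adjoints are isometric on $\B(H^\nu)$; (4) note that transposing the index set of a matrix of operators preserves its norm. Combining (3) and (4) yields $\nor{[\rho(F_{i,j})]}=\nor{[\rho(F_{j,i})^*]}=\nor{[\rho^\#(F_{j,i}^\#)]}$, which is the desired identity up to the harmless relabeling of indices.

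I expect the only real care-point, rather than a genuine obstacle, to be bookkeeping the transpose: the operator adjoint on $\M_\nu(\B(H))$ transposes the indices as well as conjugating the entries, so one should either phrase the statement symmetrically in $i,j$ or explicitly remark that permuting rows/columns of a matrix of operators is a unitary conjugation and hence isometric. Everything else is a direct substitution of definitions, and no approximation or limiting argument is needed since the identity is exact for each fixed $\nu$ and each fixed matrix $[F_{i,j}]$.
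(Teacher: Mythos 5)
Your steps (1)--(3) coincide exactly with the paper's proof: one cites the remark preceding the lemma for $\rho^\#\in\F_{t,l}$, computes $\rho^\#(F_{ij}^\#)=\rho\big((F_{ij}^\#)^\#\big)^*=\rho(F_{ij})^*$, and uses that the operator adjoint on $\B(H^\nu)$ is isometric and sends $[\rho(F_{ij})]$ to $[\rho(F_{ji})^*]$. Up to that point everything is correct and is the same route the paper takes.

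The gap is in your step (4), and it is a genuine one rather than bookkeeping. Passing from $[\rho(F_{ji})^*]$ to $[\rho(F_{ij})^*]$ is the block transpose on $\M_\nu(\B(H))$, and this is \emph{not} a simultaneous permutation of rows and columns: it moves the $(i,j)$ entry to position $(j,i)$, which is not of the form $(\sigma(i),\sigma(j))$ for a single permutation $\sigma$, so it is not a unitary conjugation. Worse, the block transpose is not even isometric in general: the transpose on $\M_\nu(\bbC)$ is isometric but not completely isometric (its cb-norm is $\nu$), and the block transpose on $\M_\nu(\B(H))$ is its amplification by $\B(H)$. Concretely, if $\C$ contains a unital copy of $\M_\nu$, $\pi$ is faithful and $F_{ij}=e_{ji}\otimes\gd_0$, then $[\rho(F_{ij})]=\sum_{i,j}e_{ij}\otimes\pi(e_{ji})$ is (a multiple of) the flip unitary, of norm $1$, whereas $[\rho^\#(F_{ij}^\#)]=\sum_{i,j}e_{ij}\otimes\pi(e_{ij})$ has norm $\nu$; so the displayed identity fails as literally stated once $\nu\geq 2$ and $\C$ is noncommutative. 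The paper's own proof asserts ``the transpose map is isometric'' at exactly this spot without justification and has the same defect, so you have faithfully reproduced the published argument --- but the justification you offer (unitary conjugation) is incorrect, and no justification can exist. The repair is to transpose the indices in the statement rather than in the proof: your steps (1)--(3) already give $\nor{[\rho(F_{ij})]}=\nor{[\rho^\#(F_{ji}^\#)]}$, since $[\rho^\#(F_{ji}^\#)]=[\rho(F_{ji})^*]=[\rho(F_{ij})]^*$, and this index-transposed version is all that the subsequent applications of the lemma (complete isometry of the left/right duality, identification of the kernels and of the $\ca$-envelopes) actually require.
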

\begin{proof}
Recall that the transpose map $A\mapsto A^t$ is isometric; hence for every $[F_{ij}]\in \M_\nu(\ell^1(\bbZ_+,\C,\ga)_r)$, we have
\begin{align*}
\nor{\left[\rho^\#(F_{ij}^\#)\right]}
& =
\nor{\left[\rho\big((F_{ij}^\#)^\#\big)^*\right]}
  =
\nor{\big[\rho(F_{ij})^*\big]}\\
& =
\nor{\big[\rho(F_{ij})^*\big]^*}
  =
\nor{\big[\rho(F_{ji})\big]}
  =
\nor{\big[\rho(F_{ij})\big]^t}
  =
 \nor{\big[\rho(F_{ij})\big]},
\end{align*}
and the proof is complete.
\end{proof}

\begin{remark}\label{r:pet}
\textup{One has to be careful with the connection between the left case and the right case. Following \cite[Remark 4]{Petarx} we get that even if $\ga$ is a $*$-automorphism and $\C$ is a commutative $\ca$-algebra then $\fA(\C,\ga,\text{is})_l$ is not always isometrically isomorphic to $\fA(\C,\ga,\text{is})_r$, otherwise $\ga$ would be always conjugate to its inverse. For a counterexample see \cite{HoaPar66}.}
\end{remark}

\section{Semicrossed products over left co-isometric and left unitary
covariant pairs}\label{right}

Let us start by examining the semicrossed products $\fA(\C,\ga,\text{co-is})_l$ and $\fA(\C,\ga,\text{un})_l$. We show that they are completely isometrically isomorphic and that their $\ca$-envelope is a crossed product. To do so, it is easier first to consider the enveloping operator algebras $\fA(\C,\ga,\text{is})_r$ and $\fA(\C,\ga,\text{un})_r$, and then use Lemma \ref{duality} to pass to the left case.

By Definition \ref{main def}, $\fA(\C,\ga,\text{is})_r$ is the enveloping operator algebra of the Banach algebra $\ell^1(\bbZ_+,\C,\ga)_r$ with respect to the representations $(\pi\times V)$, where $(\pi,V)$ is a right covariant isometric pair. For every $\nu\geq 1$ and $[F_{ij}] \in \M_\nu(\ell^1(\bbZ_+,\C,\ga)_r)$, let the seminorms
\begin{align*}
\om_\nu([F_{i,j}])
 =
\sup \left\{ \nor{[(\pi\times V)(F_{i,j})]}_{\B(H^\nu)}: (\pi,V) \text{ r.cov.isom. pair} \right\}.
\end{align*}
and let $\N=\{F\in \ell^1(\bbZ_+,\C,\ga)_r: \om_1(F)=0\}$. Then $\om_1$ induces a norm on the quotient $\ell^1(\bbZ_+,\C,\ga)_r/\N$, given by $\nor{F+\N}_\infty:= \om_1(F)$.

An analogous procedure is followed for the definition of the semicrossed product $\fA(\C,\ga,\text{un})_r$.

\begin{proposition}\label{right are the same}
The semicrossed products $\fA(\C,\ga,\text{is})_r$ and $\fA(\C,\ga,\text{un})_r$ are completely isometrically isomorphic.
\end{proposition}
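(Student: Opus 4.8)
The plan is to observe that both $\fA(\C,\ga,\text{is})_r$ and $\fA(\C,\ga,\text{un})_r$ are the enveloping operator algebras of the \emph{same} Banach algebra $\ell^1(\bbZ_+,\C,\ga)_r$, taken with respect to the family of right covariant isometric pairs and the (smaller) family of right covariant unitary pairs, respectively. Hence it suffices to show that the two families of matrix seminorms $\om_\nu$ coincide; the identity map on $\ell^1(\bbZ_+,\C,\ga)_r$ will then descend to a completely isometric isomorphism of the two completions. Since every unitary pair is in particular an isometric pair, the unitary seminorm is automatically dominated by the isometric one for every $\nu$. The content of the proposition is the reverse inequality, and I would obtain it by dilating an arbitrary right covariant isometric pair to a right covariant \emph{unitary} pair of which it is a completely isometric restriction.

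Concretely, given a right covariant isometric pair $(\pi,V)$ on $H$, I would form the inductive-limit Hilbert space $\widetilde{H}=\varinjlim\,(H\stackrel{V}{\longrightarrow}H\stackrel{V}{\longrightarrow}\cdots)$, with the canonical isometries $j_n\colon H\to\widetilde{H}$ satisfying $j_n=j_{n+1}V$. On $\widetilde{H}$ I define a unitary $U$ by $Uj_n=j_nV$ and a representation $\rho$ of $\C$ by $\rho(c)j_n(h)=j_n\big(\pi(\ga^n(c))h\big)$. As the connecting map is at each stage the isometry $V$, the operator $U$ is isometric with dense range, hence unitary; and since $VH\subseteq H$ gives $Uj_0(h)=j_0(Vh)\in j_0(H)$, the subspace $j_0(H)$ is $U$-invariant and $U$ restricts to $V$ there under the identification $j_0$.

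The key step is to verify that $\rho$ is well defined and that $(\rho,U)$ is again right covariant, and this is exactly where the covariance relation $V\pi(c)=\pi(\ga(c))V$ enters. For well-definedness one must check that the two descriptions of $\rho(c)$ arising from $j_n(h)=j_{n+1}(Vh)$ agree, which reduces precisely to $\pi(\ga^{n+1}(c))V=V\pi(\ga^n(c))$, an instance of the covariance relation; the same identity yields $U\rho(c)=\rho(\ga(c))U$, so that $(\rho,U)$ is a right covariant unitary pair. That $\rho$ is a contractive $*$-representation follows from $\ga$ being a unital $*$-homomorphism, so that $\nor{\ga^n(c)}\le\nor{c}$. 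I expect this well-definedness and covariance check to be the only genuinely delicate point; everything else is formal.

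Finally, $j_0(H)$ is invariant under both $U$ and every $\rho(c)$, with $\rho(c)|_{j_0(H)}\cong\pi(c)$ and $U|_{j_0(H)}\cong V$. Consequently, for any $[F_{ij}]\in\M_\nu(\ell^1(\bbZ_+,\C,\ga)_r)$ the compression of $[(\rho\times U)(F_{ij})]$ to $j_0(H)^{(\nu)}$ equals $[(\pi\times V)(F_{ij})]$, so that $\nor{[(\pi\times V)(F_{ij})]}\le\nor{[(\rho\times U)(F_{ij})]}$. Taking suprema over all covariant pairs gives the reverse inequality $\om_\nu^{\mathrm{is}}\le\om_\nu^{\mathrm{un}}$ for every $\nu$, hence equality of all matrix seminorms, and the identity on $\ell^1(\bbZ_+,\C,\ga)_r$ induces the desired completely isometric isomorphism $\fA(\C,\ga,\text{is})_r\cong\fA(\C,\ga,\text{un})_r$.
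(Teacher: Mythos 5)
Your proof is correct and follows the same strategy as the paper: one inequality is automatic because unitary pairs form a subfamily of isometric pairs, and the reverse is obtained by dilating each right covariant isometric pair to a right covariant unitary pair of which it is the restriction to an invariant subspace. The only difference is that the paper outsources the dilation to the proof of Proposition 2.3 of Stacey \cite{Sta93}, whereas you construct it explicitly (and correctly) via the inductive limit $H\stackrel{V}{\to}H\stackrel{V}{\to}\cdots$, the well-definedness of $\rho$ and the covariance of $(\rho,U)$ both reducing to the relation $V\pi(\ga^n(c))=\pi(\ga^{n+1}(c))V$ exactly as you indicate.
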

\begin{proof}
Since every right covariant unitary pair is a right covariant isometric pair, it suffices to prove that every right covariant isometric pair dilates to a right covariant unitary pair. But this is established in the proof of \cite[Proposition 2.3]{Sta93}.
\end{proof}

\begin{remark}\label{remark radical}
\textup{The seminorms $\om_\nu$ are not norms in general. For example, assume that the $*$-homomorphism $\ga$ has non-trivial kernel and let $c\in \ker\ga$. Then $V\pi(c)=\pi(\ga(c))V=0$ for every right covariant isometric pair $(\pi,V)$. Since $V$ is an isometry we get that $\pi(c)=0$. Hence $\om_1(c \otimes \gd_0)=0$. Note that the same holds for every $c\in \R_\ga$.}
\end{remark}

The next proposition shows the connection between the radical $\R_\ga$ and the kernel $\N$. In its proof we prove also the existence of a non-trivial right covariant unitary pair. As a consequence $\fA(\C,\ga,\text{is})_r$ and $\fA(\C,\ga,\text{un})_r$ are not zero.

\begin{proposition}
Let $\N=\{F\in \ell^1(\bbZ_+,\C,\ga)_r: \om_1(F)=0\}$. Then $\N=\ell^1(\bbZ_+,\R_\ga,\ga)_r$.
\end{proposition}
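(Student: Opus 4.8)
The plan is to establish the two inclusions separately. The containment $\ell^1(\bbZ_+,\R_\ga,\ga)_r\subseteq\N$ is essentially Remark \ref{remark radical}, while the reverse inclusion $\N\subseteq\ell^1(\bbZ_+,\R_\ga,\ga)_r$ is the substance of the statement and will be obtained by producing a single right covariant isometric pair whose kernel is exactly $\ell^1(\bbZ_+,\R_\ga,\ga)_r$.

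For the first inclusion I would record the sharpened form of Remark \ref{remark radical}: if $c\in\ker\ga^n$ then $V^n\pi(c)=\pi(\ga^n(c))V^n=0$ for every right covariant isometric pair $(\pi,V)$, and since $V^n$ is an isometry this forces $\pi(c)=0$; passing to the closure gives $\pi(c)=0$ for all $c\in\R_\ga$. Hence for $F=\sum_n c_n\otimes\gd_n$ with every $c_n\in\R_\ga$ we get $(\pi\times V)(F)=\sum_n\pi(c_n)V^n=0$, so $\om_1(F)=0$. Since each representation is $|\cdot|_1$-contractive we have $\om_1\le|\cdot|_1$, so $\om_1$ is $|\cdot|_1$-continuous and this conclusion extends from finitely supported elements to all of $\ell^1(\bbZ_+,\R_\ga,\ga)_r$.

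For the reverse inclusion the idea is to use the automorphic dilation $(\C_\infty,\ga_\infty)$ and the left regular representation of $\C_\infty\rtimes_{\ga_\infty}\bbZ$ from the preliminaries. Fixing a faithful $\widehat{\pi}$ together with the bilateral shift unitary $U$, I would set $\pi:=\widehat{\pi}\circ\gi_0$ and $V:=U$. Since $\ga_\infty\circ\gi_0=\gi_0\circ\ga$ is immediate from the definitions, one checks $V\pi(c)=U\widehat{\pi}(\gi_0(c))=\widehat{\pi}(\gi_0(\ga(c)))U=\pi(\ga(c))V$, so $(\pi,V)$ is a right covariant unitary, hence isometric, pair, with $\ker\pi=\ker\gi_0=\R_\ga$. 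Evaluating on a finitely supported $F=\sum_{n=0}^k c_n\otimes\gd_n$ gives $(\pi\times V)(F)=\sum_{n=0}^k\widehat{\pi}(\gi_0(c_n))U^n$. In the left regular representation $\widehat{\pi}(a)$ is diagonal and $U^n$ sends the summand indexed by $m$ to that indexed by $m+n$, so the summands $\widehat{\pi}(\gi_0(c_n))U^n$ occupy pairwise distinct diagonals; therefore $\om_1(F)=0$ forces $\widehat{\pi}(\gi_0(c_n))U^n=0$ for each $n$, and since $U$ is unitary and $\widehat{\pi}$ is faithful this yields $\gi_0(c_n)=0$, i.e. $c_n\in\R_\ga$. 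Thus $F\in\ell^1(\bbZ_+,\R_\ga,\ga)_r$, and the general case follows by $|\cdot|_1$-continuity.

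I expect the reverse inclusion to be the main obstacle, and within it the non-cancellation across distinct Fourier orders: the crux is that $\sum_n\widehat{\pi}(a_n)U^n=0$ with $a_n\in\C_\infty$ implies each $a_n=0$. This is precisely the transparency of the Fourier coefficients in the left regular representation, and it is what forces the detour through the automorphic dilation $(\C_\infty,\ga_\infty)$: over $\C$ itself $\ga$ need not be surjective, so there is no unitary implementing the covariance, whereas the injectivity of $\ga_\infty$ is exactly what upgrades $\widehat{\pi}(\gi_0(c_n))=0$ to $c_n\in\R_\ga$.
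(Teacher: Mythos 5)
Your proposal follows essentially the same route as the paper: both inclusions are split off, the easy one via Remark \ref{remark radical}, and the substantive one by evaluating on a covariant unitary pair built from the regular representation of $\C_\infty\rtimes_{\ga_\infty}\bbZ$ and reading off Fourier coefficients from distinct diagonals. Two small points need repair. First, with the paper's conventions ($u(e_n)=e_{n+1}$, $\widehat{\pi}(c)=\diag\{\pi(\ga^n(c))\}$) one has $U\widehat{\pi}(x)=\widehat{\pi}(\ga_\infty^{-1}(x))U$, so $(\widehat{\pi}\circ\gi_0,U)$ satisfies the \emph{left} covariance relation $\pi(c)U=U\pi(\ga(c))$, not the right one; your displayed check $U\widehat{\pi}(\gi_0(c))=\widehat{\pi}(\gi_0(\ga(c)))U$ is false as written, and you should take $V=U^*$ (i.e.\ the right regular representation, which is exactly what the paper uses). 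Second, the closing appeal to $|\cdot|_1$-continuity for the reverse inclusion is loose: the truncations of a general $F\in\N$ need not themselves lie in $\N$, so you cannot reduce to the finitely supported case that way. Instead one applies the representation directly to the norm-convergent series and extracts the $n$-th coefficient by pairing against $\xi\otimes e_n$ and $\eta\otimes e_0$, as the paper does --- this is just the rigorous form of your ``distinct diagonals'' observation, so the idea is already present in your write-up.
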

\begin{proof}
First of all, note that $\ell^1(\bbZ_+,\R_\ga,\ga)_r$ is a $|\cdot|_1$-closed ideal of the algebra $\ell^1(\bbZ_+,\C,\ga)_r$ that is contained in $\N$. Now, consider the crossed product $\C_\infty \rtimes_{\ga_\infty} \bbZ$ (see the definitions in the introduction). Then the right regular representation $(\pi\times U^*)$ of the crossed product induces a right covariant unitary pair for $(\C,\ga)$. Indeed, it suffices to prove that $\pi$ induces a representation of $\C$. Let $q\colon \C \rightarrow \C/\R_\ga$ be the canonical $*$-epimorphism and recall that $\C/\R_\ga$ embeds in $\C_\infty$. Thus $(\pi\circ q, U^*)$ is a right covariant unitary pair of $(\C,\ga)$.

Let $F \in \N$ with $F=|\cdot|_1-\lim_N \sum_{n=0}^N c_n\otimes \gd_n$. Then
\begin{align*}
\lim_N \sum_{n=0}^N \pi (q(c_n))U^{-n}
& =
\lim_N \sum_{n=0}^N \left((\pi\circ q)\times U^* \right) (c_n\otimes \gd_n)\\
& =
\left((\pi\circ q)\times U^*\right)(F).
\end{align*}
But $F\in \N$, hence $\left((\pi\circ q)\times U^*\right)(F)=0$. For $\xi,\eta \in H$ we have
\begin{align*}
\sca{\pi\circ q(c_n)(\xi),\eta}
& =
\lim_N \sum_{k=0}^N \sca{\pi (q(c_k))U^{-k}(\xi \otimes e_n),\eta\otimes e_0}\\
& =
\sca{\big((\pi\circ q)\times U^*\big)(F)(\xi \otimes e_n),\eta\otimes e_0}=0,
\end{align*}
hence $\pi(q(c_n))=0$, so $q(c_n)=0$. Thus $c_n\in \R_\ga$, for every $n\geq 0$.
\end{proof}

For the next Proposition, recall that we can define the injective $*$- homomorphism $\dot{\ga}\colon \C/\R_\ga \rightarrow \C/\R_\ga$, with $\dot{\ga}(c+\R_\ga)= \ga(c)+ \R_\ga$.

\begin{proposition}\label{right isomorphic}
The semicrossed product $\fA(\C,\ga,\text{is})_r$ is completely isometrically isomorphic to the semicrossed product $\fA(\C/\R_\ga,\dot{\ga},\text{is})_r$.
\end{proposition}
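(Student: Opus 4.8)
The plan is to push the quotient map $q:\C\rightarrow\C/\R_\ga$ down to the level of $\ell^1$-algebras and to verify that, after passing to the enveloping operator algebras, it becomes a surjective complete isometry. First I would observe that $q$ induces an algebra homomorphism
\begin{align*}
q_*:\ell^1(\bbZ_+,\C,\ga)_r\rightarrow \ell^1(\bbZ_+,\C/\R_\ga,\dot{\ga})_r:\ c\otimes\gd_n\mapsto q(c)\otimes\gd_n,
\end{align*}
which is multiplicative because $q\circ\ga=\dot{\ga}\circ q$ gives $q(c\cdot\ga^n(y))=q(c)\cdot\dot{\ga}^n(q(y))$, and which is contractive since $q$ is, so it extends to the $\ell^1$-completions. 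As $q$ is a surjective $*$-epimorphism, $q_*$ has dense range, and by the previous proposition its kernel is precisely $\ell^1(\bbZ_+,\R_\ga,\ga)_r=\N$. Thus $q_*$ already implements an algebraic isomorphism of $\ell^1(\bbZ_+,\C,\ga)_r/\N$ onto a dense subalgebra of $\ell^1(\bbZ_+,\C/\R_\ga,\dot{\ga})_r$.

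The heart of the argument is a bijective correspondence between the right covariant isometric pairs of the two systems. On the one hand, if $(\sigma,W)$ is a right covariant isometric pair for $(\C/\R_\ga,\dot{\ga})$, then $(\sigma\circ q,W)$ is one for $(\C,\ga)$, since $W\,\sigma(q(c))=\sigma(\dot{\ga}(q(c)))\,W=\sigma(q(\ga(c)))\,W$. On the other hand, and this is the key point, every right covariant isometric pair $(\pi,V)$ for $(\C,\ga)$ automatically annihilates the radical: iterating the right covariance relation gives $V^n\pi(c)=\pi(\ga^n(c))V^n$, so for $c\in\ker\ga^n$ we get $V^n\pi(c)=0$, whence $\pi(c)=0$ because $V^n$ is an isometry (this is exactly the argument of remark \ref{remark radical}). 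By continuity $\pi$ vanishes on $\R_\ga$, so it factors as $\pi=\sigma\circ q$ for a $*$-representation $\sigma$ of $\C/\R_\ga$; using surjectivity of $q$ one checks that $(\sigma,V)$ then satisfies the right covariance relation for $(\C/\R_\ga,\dot{\ga})$. These two assignments are mutually inverse.

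Next I would match the seminorms. Under this correspondence both $(\pi\times V)$ and $(\sigma\times V)\circ q_*$ send $c\otimes\gd_n$ to $\sigma(q(c))V^n$, so they coincide, and likewise at every matrix level. Taking suprema over the two matching families of pairs then yields $\om_\nu(F)=\om'_\nu(q_*(F))$ for all $\nu\geq 1$ and all $[F_{ij}]\in\M_\nu(\ell^1(\bbZ_+,\C,\ga)_r)$, where $\om'_\nu$ denotes the seminorms for $(\C/\R_\ga,\dot{\ga})$. Hence $q_*$ is completely isometric for the enveloping matrix norms, and since it has dense range it extends to a surjective complete isometry $\fA(\C,\ga,\text{is})_r\rightarrow\fA(\C/\R_\ga,\dot{\ga},\text{is})_r$, which is multiplicative by continuity. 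This is the claimed completely isometric isomorphism.

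The only genuinely delicate step is verifying that an isometric covariant pair must kill $\R_\ga$; everything else is bookkeeping around $q_*$. It is worth noting that this is precisely where the isometry hypothesis enters, and it is what makes the reduction to the injective system $(\C/\R_\ga,\dot{\ga})$ possible — the same factoring would fail for merely contractive pairs.
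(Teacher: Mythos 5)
Your proposal is correct and follows essentially the same route as the paper: both arguments rest on the observation that an isometric right covariant pair must annihilate $\R_\ga$ (the paper's remark on the radical, which you reprove via $V^n\pi(c)=\pi(\ga^n(c))V^n$), and both use the resulting two-way correspondence of covariant pairs to match the defining seminorms at every matrix level. The only cosmetic difference is that you package the two inequalities of the paper's proof as a single bijection of representation families before taking suprema.
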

\begin{proof}
It suffices to show that the map
\begin{align*}
Q\colon \ell^1(\bbZ_+,\C,\ga)_r /\N
& \rightarrow
\ell^1(\C/\R_\ga, \dot{\ga}, \bbZ_+)_r \\
c\otimes \gd_n + \N \hspace{2mm}
& \mapsto
\hspace{2mm} (c+\R_\ga)\otimes \gd_n,
\end{align*}
is completely isometric. To this end, let $F=\sum_{n=0}^k c_n\otimes \gd_n$ and $G=Q(F+\N)=\sum_{n=0}^k (c_n+\R_\ga)\otimes \gd_n$. If $(\pi,V)$ is a right covariant isometric pair for $\ell^1(\bbZ_+,\C,\ga)_r$ acting on $H$, then $\pi(\R_\ga)H=0$ by Remark \ref{remark radical}. Thus $\pi$ induces a representation $\sigma\colon \C/\R_\ga \rightarrow \B(H)$ with $\sigma(c + \R_\ga)=\pi(c)$. Hence $(\sigma,V)$ is a right covariant isometric pair of $\ell^1(\C/\R_\ga,\dot{\ga},\bbZ_+)_r$ and $(\pi\times V)(F)=(\sigma \times V)(G)$. Thus,
\begin{align*}
\nor{(\pi\times V)(F)} = \nor{(\sigma\times V)(G)} \leq \nor{G}_\infty.
\end{align*}
Hence $\om_1(F)\leq \nor{G}_\infty$ and so $\nor{F+\N}_\infty=\om_1(F) \leq \nor{G}_\infty.$

On the other hand, let $(\rho,V)$ be a right covariant isometric pair of the algebra $\ell^1(\C/\R_\ga,\dot{\ga},\bbZ_+)_r$; then the map
\begin{align*}
\pi: = \rho \circ q\colon \C\rightarrow \B(H) : c \mapsto \rho(c+ \R_\ga)
\end{align*}
is a representation of $\C$. It is easy to see that $(\pi,V)$ is a right covariant isometric pair of $\ell^1(\bbZ_+,\C,\ga)_r$ and that $(\pi\times V)(F)=(\rho\times V)(G)$. Hence,
\begin{align*}
\nor{(\rho\times V)(G)} = \nor{(\pi\times V)(F)} \leq \om_1(F) = \nor{F +\N}_\infty.
\end{align*}
Thus $\nor{G}_\infty \leq \nor{F+\N}_\infty$. The same arguments work for the matricial elements $[F_{ij}]\in \M_\nu(\ell^1(\bbZ_+,\C,\ga)_r)$, $\nu\geq 1$, and the proof is complete.
\end{proof}

Hence, we can assume that $\ga\colon \C \rightarrow \C$ is injective. Then $\om_1=\nor{\cdot}_\infty$ and $\C$ embeds in $\C_\infty$.

\begin{proposition}\label{C*-envelope right isometric}
Let $\ga\colon \C\rightarrow \C$ be an injective $*$-homomorphism. Then, the $\ca$-envelope of $\fA(\C,\ga,\text{un})_r$ is the crossed product $\C_\infty \rtimes_{\ga_\infty} \bbZ$.
\end{proposition}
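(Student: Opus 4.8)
The plan is to realize $\C_\infty\rtimes_{\ga_\infty}\bbZ$ as a $\ca$-cover of $\fA(\C,\ga,\text{un})_r$ and then to show that this cover has trivial \v{S}ilov ideal. Since $\ga$ is injective, $\ker\gi_0=\R_\ga=(0)$, so I identify $\C$ with its isometric image $\gi_0(\C)\subseteq\C_\infty$. Write $u$ for the canonical unitary of the crossed product, normalized (as for the right regular representation used above) so that $(\gi_0,u)$ is a right covariant unitary pair for $(\C,\ga)$, i.e. $u\,\gi_0(c)=\gi_0(\ga(c))\,u$. By the universal property of the enveloping algebra this pair induces a completely contractive homomorphism
\[
\Phi\colon \fA(\C,\ga,\text{un})_r\longrightarrow \C_\infty\rtimes_{\ga_\infty}\bbZ,\qquad \Phi(c\otimes\gd_n)=\gi_0(c)\,u^{n}.
\]
To see that $\range\Phi$ generates the crossed product, I would use $\ga_\infty(\gi_n(c))=\gi_{n-1}(c)$ for $n\geq 1$ to obtain $\gi_n(c)=\ga_\infty^{-n}(\gi_0(c))=u^{*n}\gi_0(c)\,u^{n}$; thus each $\gi_n(\C)$, and by density of $\bigcup_n\gi_n(\C)$ all of $\C_\infty$, lies in $\ca(\gi_0(\C),u)$, which together with $u$ is the whole crossed product.

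The heart of the matter is that $\Phi$ is completely isometric. Complete contractivity gives $\nor{\Phi^{(\nu)}[F_{ij}]}\leq\om_\nu[F_{ij}]$, so I must produce the reverse inequality. Given an arbitrary right covariant unitary pair $(\pi,V)$ on $H$, I would extend $\pi$ to $\C_\infty$ by setting $\pi_\infty(\gi_n(c)):=V^{*n}\pi(c)V^{n}$. Unitarity of $V$, together with $\pi(\ga(c))=V\pi(c)V^{*}$, makes this well defined on $\bigcup_n\gi_n(\C)$ and contractive on each $\gi_n(\C)$ (as $\gi_n$ is isometric), so it extends to a $*$-representation $\pi_\infty\colon\C_\infty\to\B(H)$; the same computation shows $(\pi_\infty,V)$ is covariant for $(\C_\infty,\ga_\infty)$ and hence integrates to a $*$-representation $\pi_\infty\times V$ of $\C_\infty\rtimes_{\ga_\infty}\bbZ$. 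Since $(\pi\times V)=(\pi_\infty\times V)\circ\Phi$ on generators, each right covariant unitary pair factors contractively through $\Phi$, which yields $\om_\nu[F_{ij}]\leq\nor{\Phi^{(\nu)}[F_{ij}]}$. Hence $\C_\infty\rtimes_{\ga_\infty}\bbZ$ is a $\ca$-cover of $\fA(\C,\ga,\text{un})_r$.

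It remains to show that the \v{S}ilov ideal $\J$ of $\Phi(\fA(\C,\ga,\text{un})_r)$ in this cover is $(0)$. I would invoke the gauge (dual) action $\{\beta_z\}_{z\in\bbT}$, fixing $\C_\infty$ pointwise and scaling $\beta_z(u)=zu$. Each $\beta_z$ is a $*$-automorphism of the crossed product carrying $\range\Phi$ onto itself completely isometrically, so it sends boundary ideals to boundary ideals; maximality of $\J$ then forces $\beta_z(\J)=\J$, i.e. $\J$ is gauge-invariant. Separately, since $\J$ is a two-sided ideal it is invariant under conjugation by the unitary $u$, so $\J\cap\C_\infty$ is a $\ga_\infty$-invariant ideal of $\C_\infty$. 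The quotient map is isometric on $\gi_0(\C)$, whence $\J\cap\gi_0(\C)=(0)$; applying $\ga_\infty^{n}$ gives $\J\cap\gi_n(\C)=(0)$ for every $n$, and density of $\bigcup_n\gi_n(\C)$ upgrades this to $\J\cap\C_\infty=(0)$. Finally, averaging over $\beta$ produces the faithful conditional expectation $E\colon\C_\infty\rtimes_{\ga_\infty}\bbZ\to\C_\infty$; for $x\in\J$ one has $E(x^{*}x)\in\J\cap\C_\infty=(0)$, so $x=0$. Thus $\J=(0)$ and $\ca_{\text{env}}(\fA(\C,\ga,\text{un})_r)=\C_\infty\rtimes_{\ga_\infty}\bbZ$.

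I expect the final step — proving that the cover is \emph{minimal} — to be the main obstacle. Building the $\ca$-cover (the extension defining $\pi_\infty$) is comparatively routine once one exploits that $V$ is unitary; the delicate point is combining the direct-limit geometry with the dual action to show simultaneously that the \v{S}ilov ideal is gauge-invariant and meets $\C_\infty$ trivially, which is precisely what allows the gauge-invariance uniqueness argument to collapse $\J$ to zero.
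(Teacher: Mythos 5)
Your proof is correct and follows essentially the same route as the paper: extend an arbitrary right covariant unitary pair of $(\C,\ga)$ to one of $(\C_\infty,\ga_\infty)$ to get complete isometry into the crossed product, check the image generates, and kill the \v{S}ilov ideal via gauge-invariance, its trivial intersection with $\C_\infty$, and the faithful conditional expectation. The only differences are cosmetic — you spell out why the \v{S}ilov ideal is gauge-invariant and phrase the final contradiction through $E(x^*x)=0$ rather than through a nonzero element of $\J\cap\C$ — and your formula $\pi_\infty(\gi_n(c))=V^{*n}\pi(c)V^{n}$ places the adjoints on the correct side.
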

\begin{proof}
First we show that $\C_\infty \rtimes_{\ga_\infty} \bbZ$ is a $\ca$-cover of $\fA(\C,\ga,\text{un})_r$. It is clear that $\ell^1(\bbZ_+,\C,\ga)_r \subseteq \ell^1(\bbZ,\C_\infty,\ga_\infty)_r$ and that by restricting any right covariant unitary pair of $(\C_\infty,\ga_\infty)$ we get a right covariant unitary pair of $\ell^1(\bbZ_+,\C,\ga)_r$. Thus $  \nor{F}_{\C_\infty \rtimes_{\ga_\infty} \bbZ} \leq \nor{F}_\infty, $ for every $F$ in $\ell^1(\bbZ_+,\C,\ga)_r$.

On the other hand, let $(\rho,U)$ be a right covariant unitary pair of the algebra $\ell^1(\bbZ_+,\C,\ga)_r$. We can extend $(H,\rho)$ to a representation of $\C_\infty$ in the following way: let $x\in \C_\infty$ be such that $\ga_\infty^n(x)\in \C$ for some $n$ and let $\rho'(x)=U^n\rho(\ga_\infty^n(x))(U^*)^n$. It is easy to see that $\rho'(x)$ is independent of the choice of $n$, hence $\rho'$ extends to a representation of $\C_\infty$. Moreover, $(\rho',U)$ is a right covariant unitary pair of $\ell^1(\bbZ,\C_\infty,\ga_\infty)_r$. Indeed, let $x\in \C_\infty$ such that $\ga_\infty^n(x)\in \C$; then $\ga(\ga^n_\infty(x))=\ga_\infty^{n+1}(x)$, since $\ga_\infty$ extends $\ga$. Thus,
\begin{align*}
U\rho'(x)
& =
U \cdot U^n \rho\big(\ga_\infty^n(x)\big) (U^*)^n
  =
U^{n}\cdot U \rho\big(\ga_\infty^n(x)\big)\cdot (U^*)^n \\
& =
U^{n}\cdot \rho\big(\ga(\ga_\infty^n(x))\big) U\cdot (U^*)^n
  =
U^{n} \rho\big(\ga_\infty^{n+1}(x)\big) (U^*)^{n-1}\\
& =
U^{n}\rho\big(\ga_\infty^n(\ga_\infty(x))\big)(U^*)^{n}\cdot U
  =
\rho'\big(\ga_\infty(x)\big)U.
\end{align*}
Hence, for every $F$ in $\ell^1(\bbZ_+,\C,\ga)_r$ and every right covariant unitary pair $(\rho,U)$ of $\ell^1(\bbZ_+,\C,\ga)_r$, we have that
\begin{align*}
\nor{(\rho\times U)(F)}=\nor{(\rho'\times U)(F)} \leq  \nor{F}_{\C_\infty \rtimes_{\ga_\infty} \bbZ},
\end{align*}
so $\nor{F}_\infty \leq \nor{F}_{\C_\infty \rtimes_{\ga_\infty} \bbZ_+}$. Note that the same arguments work for $[F_{ij}]\in \M_\nu(\ell^1(\bbZ_+,\C,\ga)_r)$, $\nu\geq 1$. Hence, if $(\widehat{\pi}\times U)$ is the right regular representation of the crossed product, the map
\begin{align*}
\fA(\C,\ga,\text{un})_r \rightarrow \C_\infty \rtimes_{\ga_\infty} \bbZ:
\sum_{n=0}^k c \otimes \gd_n \mapsto \sum_{n=0}^k \widehat{\pi}(c)U^n
\end{align*}
is a completely isometric homomorphism.

In order to conclude that the crossed product is a $\ca$-cover, it suffices to prove that every element of the form $\widehat{\pi}[0,\dots,c,\ga(c),\dots]$ is in the $\ca$-algebra $\ca(\widehat{\pi},U)$ generated by the range of $(\widehat{\pi}\times U)$. Recall that $\widehat{\pi}(\ga_\infty(x))=U\widehat{\pi}(x)U^*$ for every $x\in \C_\infty$, so $\widehat{\pi}(x)= U\widehat{\pi}(\ga_\infty^{-1}(x)) U^*$, for every $x\in \C_\infty$. Thus
\begin{align*}
\widehat{\pi}[0,c,\ga(c),\dots]
& =
U \widehat{\pi}(\ga_\infty^{-1}[0,c,\dots]) U^* \\
& =
U\widehat{\pi}[c,\ga(c),\dots] U^*\in \ca(\widehat{\pi},U).
\end{align*}
Induction shows that every element of the form $\widehat{\pi}[0,\dots,c,\ga(c),\dots]$ is in $\ca(\widehat{\pi},U)$.

To end the proof, let $\ca_e$ be the $\ca$-envelope of $\fA(\C,\ga,\text{un})_r$ and $\Phi$ be the $*$-epimorphism $ \Phi\colon \C_\infty \rtimes_{\ga_\infty} \bbZ \rightarrow \ca_e$, such that $\Phi(\widehat{\pi}[c,\ga(c),\dots])= c\otimes \gd_0$ for every $c\in \C$. Assume that the \v{S}ilov ideal $\J=\ker\Phi$ is non-trivial. Then it is invariant by the gauge action for the right regular representation, hence it has non-trivial intersection with the fixed point algebra of the gauge action, which is exactly $\C_\infty$. So, there is an $n$ such that $\J\cap \ga_\infty^{-n}(\C) \neq (0)$. Let $0\neq c\in \C$ such that $\widehat{\pi}[0,\dots,c,\dots] \in \J$. Then $\widehat{\pi}[c,\ga(c),\dots]=(U^*)^{n}\widehat{\pi}[0,\dots,c,\dots]U^{n} \in \J$, so $\J\cap \C\neq (0)$. But then
\begin{align*}
0 = \nor{\widehat{\pi}[c,\ga(c),\dots] +\J}
  =
\nor{\Phi(\widehat{\pi}[c,\ga(c),\dots])}
  =
\nor{c \otimes \gd_0}_\infty=\nor{c}_\C,
\end{align*}
which is a contradiction, since $\C$ is contained isometrically in the semicrossed product. Thus $\J=(0)$.
\end{proof}

Using Propositions \ref{right are the same}, \ref{right isomorphic} and \ref{C*-envelope right isometric}, we arrive to the following Theorem for the general case.

\begin{theorem}\label{theorem C^*-envelope right isometric}
Let $\ga\colon \C\rightarrow \C$ be a $*$-homomorphism. Then the $\ca$-envelope of the semicrossed products $\fA(\C,\ga,\text{is})_r$ and $\fA(\C,\ga,\text{un})_r$ is the crossed product $(\C/\R_\ga)_\infty \rtimes_{(\dot{\ga})_\infty} \bbZ$.
\end{theorem}
\begin{proof}
By Propositions \ref{right isomorphic} and \ref{right are the same} we have that $$\fA(\C,\ga,\text{is})_r \simeq \fA(\C/ \R_\ga,\dot{\ga},\text{is})_r \simeq \fA(\C/ \R_\ga,\dot{\ga},\text{un})_r,$$ thus they have the same $\ca$-envelope. By Proposition \ref{C*-envelope right isometric} this is the crossed product $(\C/\R_\ga)_\infty \rtimes_{(\dot{\ga})_\infty} \bbZ$.
\end{proof}

Now, for the left case, let $\N_{3,l}$ be the kernel with respect to the collection $\F_{3,l}$. Then $\N_{3,l}=\N$, by Lemma \ref{duality}
and because $\R_\ga$ is self-adjoint. By the same Lemma,
\begin{align*}
\nor{[F_{ij}+\N]}_\nu=\nor{[F_{ij}^\# +\N]}_\nu,
\end{align*}
for every $[F_{ij}] \in \M_\nu(\ell^1(\bbZ_+,\C,\ga)_l)$, $\nu\geq 1$. Hence, we get the next proposition.

\begin{proposition}\label{left isomorphic 1}
The semicrossed product $\fA(\C,\ga,\text{co-is})_l$ is completely isometrically isomorphic to the semicrossed product $\fA(\C/\R_\ga, \dot{\ga},\text{co-is})_l$.
\end{proposition}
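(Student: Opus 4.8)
The plan is to deduce the statement from its right-isometric counterpart, Proposition \ref{right isomorphic}, by transporting everything through the duality map $^\#$. In fact all of the analytic content has already been recorded in the two facts displayed just before the statement: $\N_{3,l}=\N$ and the matrix-norm identity $\nor{[F_{ij}+\N]}_\nu=\nor{[F_{ij}^\#+\N]}_\nu$. What remains is only the bookkeeping that upgrades these isometric statements to an honest (linear and multiplicative) completely isometric isomorphism.

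First I would record that, since $\F_{3,l}$ and $\F_{3,r}$ are exactly the collections matched by $\rho\mapsto\rho^\#$ (Lemma \ref{duality} with $t=3$: left co-isometric versus right isometric), the antilinear isometry $^\#\colon\ell^1(\bbZ_+,\C,\ga)_l\to\ell^1(\bbZ_+,\C,\ga)_r$ carries $\N_{3,l}$ onto $\N$; here one uses that $\R_\ga$ is selfadjoint, so that $\ell^1(\bbZ_+,\R_\ga,\ga)_l$ is sent onto $\ell^1(\bbZ_+,\R_\ga,\ga)_r$. The matrix-norm identity then says precisely that the induced map on the quotients is completely isometric, so $^\#$ extends to a completely isometric, \emph{conjugate-linear and anti-multiplicative} bijection
\[
\Theta\colon \fA(\C,\ga,\text{co-is})_l \longrightarrow \fA(\C,\ga,\text{is})_r,
\]
and, by the same token, an analogous map $\Theta'\colon \fA(\C/\R_\ga,\dot\ga,\text{is})_r\to\fA(\C/\R_\ga,\dot\ga,\text{co-is})_l$ for the quotient system. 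That $^\#$ reverses products and conjugates scalars is the short computation $((\gd_n\otimes c)\ast_l(\gd_m\otimes y))^\#=(\gd_m\otimes y)^\#\ast_r(\gd_n\otimes c)^\#$, extended by continuity.

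Then I would simply compose. Writing $\Psi\colon \fA(\C,\ga,\text{is})_r\to\fA(\C/\R_\ga,\dot\ga,\text{is})_r$ for the linear, multiplicative completely isometric isomorphism produced in Proposition \ref{right isomorphic}, the composite
\[
\Theta'\circ\Psi\circ\Theta\colon \fA(\C,\ga,\text{co-is})_l\longrightarrow \fA(\C/\R_\ga,\dot\ga,\text{co-is})_l
\]
is a completely isometric bijection. It is \emph{linear} because it is built from two conjugate-linear maps with one linear map between them, and \emph{multiplicative} because the two order-reversals contributed by $\Theta$ and $\Theta'$ cancel against the order-preserving $\Psi$. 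Evaluating on generators gives $\gd_n\otimes c\mapsto c^*\otimes\gd_n\mapsto(c^*+\R_\ga)\otimes\gd_n\mapsto\gd_n\otimes(c+\R_\ga)$, using $(c^*+\R_\ga)^*=c+\R_\ga$ in $\C/\R_\ga$, so the composite is exactly the natural quotient map, as wanted.

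The step needing the most attention is not analytic but formal: keeping straight that $^\#$ is conjugate-linear and anti-multiplicative, so that applying it an even number of times restores both linearity and multiplicativity. As a sanity check --- and as the seed of a self-contained alternative that mirrors the proof of Proposition \ref{right isomorphic} verbatim in the left co-isometric setting --- one notes that for any left co-isometric pair $(\pi,V)$ and any $c\in\ker\ga^n$ the covariance relation gives $\pi(c)V^n=V^n\pi(\ga^n(c))=0$, whence $\pi(c)=\pi(c)V^n(V^*)^n=0$; thus $\pi(\R_\ga)=0$ and $\pi$ factors through $\C/\R_\ga$, which is all that is needed to run the two-sided norm comparison there in this case.
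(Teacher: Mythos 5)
Your proposal is correct and follows essentially the same route as the paper: the paper derives this proposition from Proposition \ref{right isomorphic} via Lemma \ref{duality}, using exactly the two facts you cite ($\N_{3,l}=\N$ and the matrix-norm identity $\nor{[F_{ij}+\N]}_\nu=\nor{[F_{ij}^{\#}+\N]}_\nu$), leaving the bookkeeping implicit. Your careful tracking of the conjugate-linearity and anti-multiplicativity of $^{\#}$, and the observation that $\pi(\R_\ga)=0$ for left co-isometric pairs, just make explicit what the paper compresses into ``Hence, we get the next proposition.''
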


\begin{theorem}\label{C*-envelope left co-isometric}
Let $\ga\colon \C\rightarrow \C$ be a $*$-homomorphism. Then the $\ca$-envelope of the semicrossed products $\fA(\C,\ga,\text{co-is})_l$ and $\fA(\C,\ga,\text{un})_l$ is the crossed product $(\C/\R_\ga)_\infty \rtimes_{(\dot{\ga})_\infty} \bbZ$.
\end{theorem}
\begin{proof}
Without loss of generality, we can assume that $\ga\colon \C\rightarrow \C$ is injective. We will show that there is a completely isometric homomorphism of $\fA(\C,\ga,\text{co-is})_l$ into $\C_\infty\rtimes_{\ga_\infty}\bbZ$. Let $[F_{ij}] \in \M_\nu\left((\ell^1(\bbZ_+,\C,\ga)_l\right)$ and let $(\pi,V)$ be a left covariant co-isometric pair. Then, $(\pi,V^*)$ is a right covariant isometric pair, hence it dilates to a right covariant unitary pair $(\Pi,U)$ of $(\C_\infty,\ga_\infty)$. Thus, $(\Pi\times U)$ is a dilation of the representation $(\pi\times V^*)=(V\times \pi)^\#$. So, the pair $(\Pi,U^*)$ is a left covariant unitary pair of $(\C_\infty,\ga_\infty)$, hence $(\Pi\times U)^\#$ is a representation of the crossed product $\C_\infty \rtimes_{\ga_\infty} \bbZ$. Thus, we have that
\begin{align*}
\|[ (V\times\pi) (F_{ij})]\|
& =
\nor{[(V\times \pi)^\#(F_{ij}^\#)]}
 \leq
\nor{[(\Pi\times U^*)(F_{ij}^\#)]}\\
& =
\nor{[(\Pi \times U)^\#(F_{ij})]}
 \leq
\nor{[F_{ij}]}_{\M_\nu(\C_\infty \rtimes_{\ga_\infty}
 \bbZ)}.
\end{align*}
Thus $\nor{[F_{ij}]}_\nu \leq \nor{[F_{ij}]}_{\M_\nu(\C_\infty \rtimes_{\ga_\infty} \bbZ)}$. Moreover one can easily check that $\nor{[F_{ij}]}_{\M_\nu(\C_\infty \rtimes_{\ga_\infty} \bbZ)} \leq \nor{[F_{ij}]}_\nu $. Hence, the identity map $\ell^1(\bbZ_+,\C,\ga)_l \hookrightarrow \C_\infty \rtimes_{\ga_\infty} \bbZ$ is a completely isometric homomorphism. Since we have assumed that $\ga$ is injective, then $\N_{3,l}=(0)$. Thus the identity map extends to a completely isometric homomorphism of $\fA(\C,\ga,\text{co-is})_l$. The rest of the proof goes in a similar way with the one of Proposition \ref{C*-envelope right isometric}.
\end{proof}

\begin{remark}\label{rem exe}
\textup{From a first look, it seems that the theory we used here could be related to the crossed product by an endomorphism $\T(\C,\ga,\L)$ examined in \cite{Exel03}, but there is a significant difference. In \cite[Definition 3.1]{Exel03} Exel considers a universal $\ca$-algebra for which the representation theory consists of pairs $(\pi,S)$ such that $S\pi(c)=\pi(\ga(c))S$, but also $S^*\pi(c) S=\pi(\L(c))$, for a chosen transfer operator $\L$. On the other hand the pairs we examined do define a transfer operator on every Hilbert space (just by defining $\L(\pi(c))=S^* \pi(c) S$), but $\L$ is not the same for every such pair.}

\textup{Nevertheless, our theory is applicable to $\U(\C,\ga)$ defined in \cite[Definition 4.4]{Exel03}. Let the operator algebra generated by the analytic polynomials in $\U(\C,\ga)$. Then, every representation consists of pairs $(\pi,S)$ where $S$ is an isometry, subject to the relation $\pi(\ga(c))=S \pi(c) S^*$. Hence, for any such a pair we get that $\pi(\ga(c))S=S\pi(c)$ and we can apply the methods of Theorem \ref{theorem C^*-envelope right isometric}. In particular, one can show that the $\ca$-envelope of the non-selfadjoint part of $\U(\C,\ga)$ generated by polynomials is the crossed product $\C_\infty \rtimes_{\ga_\infty} \bbZ$.}

\textup{The same holds for $\C \rtimes_{\ga,\L} \bbN$ \cite[Definition 3.7]{Exel03} when $\ga(\C)$ is hereditary and $\ga$ is $*$-injective, because of \cite[Theorem 4.7]{Exel03}.}
\end{remark}

\section{Semicrossed products over left contractive and left isometric covariant pairs}\label{left}

We recall that $\fA(\C,\ga,\text{contr})_l$ is the enveloping operator algebra of the algebra $\ell^1(\bbZ_+,\C,\ga)_l$ with respect to the family of the representations $(V\times \pi)$, where $(\pi,V)$ ranges over left covariant contractive pairs. For every $\nu\geq 1$ and $[F_{ij}] \in \M_\nu(\ell^1(\bbZ_+,\C,\ga)_r)$, we define the seminorms
\begin{align*}
\nor{[F_{i,j}]}_\nu = \sup\{ \nor{[(\pi\times V)(F_{i,j})]}_{\B(H^\nu)}: (\pi,V) \text{ r.cov.contr. pair}\}.
\end{align*}

\begin{proposition}\label{left isomorphic 2}
The semicrossed products $\fA(\C,\ga,\text{contr})_l$ and $\fA(\C,\ga,\text{is})_l$ are completely isometrically isomorphic.
\end{proposition}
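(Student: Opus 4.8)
The plan is to show that the two families of representations induce the same matrix seminorms on $\ell^1(\bbZ_+,\C,\ga)_l$, so that the identity map on this $\ell^1$-Banach algebra descends to a completely isometric isomorphism of the two enveloping operator algebras. One inequality is free: every left covariant isometric pair is in particular a left covariant contractive pair, so at every matrix level the supremum defining the norm of $\fA(\C,\ga,\text{contr})_l$ dominates that of $\fA(\C,\ga,\text{is})_l$. The whole content is the reverse inequality, which I would obtain by dilating an arbitrary left covariant contractive pair to a left covariant isometric pair.

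So fix a left covariant contractive pair $(\pi,V)$ on $H$, that is $\pi(c)V=V\pi(\ga(c))$ for all $c\in\C$ with $V$ a contraction, and let $W$ be the minimal (Sch\"affer form) isometric dilation of $V$ acting on $K=H\oplus H\oplus\cdots$, built from the defect operator $D=(I-V^*V)^{1/2}$. The key algebraic observation, and the step I expect to be the main obstacle, is that $D$ commutes with $\pi(\ga(c))$ for every $c\in\C$. Indeed, taking adjoints in the covariance relation and using selfadjointness of $\C$ gives $V^*\pi(c)=\pi(\ga(c))V^*$; combining the two relations yields $V^*V\pi(\ga(c))=V^*\pi(c)V=\pi(\ga(c))V^*V$, so $V^*V$, and hence $D$ by continuous functional calculus, commutes with $\pi(\ga(\C))$.

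With this in hand I would set $\tilde\pi(c)=\diag\big(\pi(c),\pi(\ga(c)),\pi(\ga^2(c)),\dots\big)$ on $K$, which is a $*$-representation of $\C$ since each $\pi\circ\ga^n$ is. A direct block computation then shows $(\tilde\pi,W)$ is a left covariant isometric pair: the diagonal entries reproduce the original relation $\pi(c)V=V\pi(\ga(c))$, the first subdiagonal entry requires exactly $\pi(\ga(c))D=D\pi(\ga(c))$ (supplied by the commutation above), and the remaining subdiagonal entries amount to the identities $\pi(\ga^{n+1}(c))=\pi(\ga^n(\ga(c)))$. Moreover $\tilde\pi$ restricts to $\pi$ on the initial summand $H$, and the power-dilation property $V^n=P_HW^n|_H$ holds, so compressing $(W\times\tilde\pi)$ back to $H$ returns $(V\times\pi)$; concretely $(V\times\pi)(F)=P_H\,(W\times\tilde\pi)(F)\,|_H$ for every $F$.

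Since compression to a subspace is completely contractive, this identity gives $\nor{[(V\times\pi)(F_{ij})]}\le\nor{[(W\times\tilde\pi)(F_{ij})]}$ for every $[F_{ij}]\in\M_\nu(\ell^1(\bbZ_+,\C,\ga)_l)$, and the right-hand side is bounded by the $\fA(\C,\ga,\text{is})_l$-seminorm. Taking the supremum over all left covariant contractive pairs yields the reverse inequality at every matrix level, so the two matrix-seminorms coincide. Passing to the common kernel and completing, the identity map on $\ell^1(\bbZ_+,\C,\ga)_l$ extends to the desired completely isometric isomorphism $\fA(\C,\ga,\text{contr})_l\simeq\fA(\C,\ga,\text{is})_l$.
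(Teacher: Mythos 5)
Your proposal is correct and follows the same strategy as the paper: one inequality is immediate because isometric covariant pairs are contractive covariant pairs, and the substance is that every left covariant contractive pair dilates to a left covariant isometric pair. The only difference is that the paper cites \cite{MuhSol06} for this dilation, whereas you construct it explicitly via the Sch\"affer isometric dilation, correctly identifying the commutation of the defect operator $D=(I-V^*V)^{1/2}$ with $\pi(\ga(\C))$ (a consequence of the covariance relation and its adjoint) as the point that makes the block computation close up.
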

\begin{proof}
Since every left covariant isometric pair is a left covariant contractive pair, it suffices to prove that every left covariant contractive pair dilates to a left covariant isometric pair. But this is established in \cite{MuhSol06}.
\end{proof}

The following is an example of a left covariant isometric pair. It follows that the semicrossed products $\fA(\C,\ga,\text{contr})_l$ and $\fA(\C,\ga,\text{is})_l$ are not zero; moreover, that the $\nu$-seminorms are in fact norms. In Theorem \ref{left only one} we show that the following construction gives a completely isometric representation of $\fA(\C,\ga,\text{contr})_l$ and $\fA(\C,\ga,\text{is})_l$.

\begin{example}\label{example left pure isometric}
\textup{Let $(H_0,\pi)$ be a representation of $\C$. We define $\widetilde{\pi}(c) =\diag\{\pi(\ga^n(c)):n\in\bbZ_+\}$, and $S=I_{H_0}\otimes s$, where $s$ is the unilateral shift, acting on the Hilbert space $H_0\otimes\ell^2(\bbZ_+)$. Then $(\widetilde{\pi},S)$ is a left covariant isometric pair.}

\textup{Moreover, \emph{if $(H_0,\pi)$ is faithful, then the induced representation $(S\times \widetilde{\pi})$ is faithful on $\ell^1(\bbZ_+,\C,\ga)_l$}. Indeed, let $F=|\cdot|_1-\lim_N \sum_{n=0}^N \gd_n\otimes c_n$, such that $(S\times \widetilde{\pi})(F)=0$. Then, for every $\xi,\eta \in H_0$,
\begin{align*}
\sca{\pi(c_n)\xi,\eta}
& =
\lim_N \sum_{k=0}^N \sca{S^k\widetilde{\pi}(c_k)(\xi\otimes e_0), \eta\otimes e_n} \\
& =
\sca{(S\times \widetilde{\pi})(F)(\xi\otimes e_0), \eta\otimes e_n}=0.
\end{align*}
Hence $c_n=0$  for every $n$, since $\pi$ is faithful, thus $F=0$.}
\end{example}

For every left covariant isometric pair $(\pi,V)$ we denote by $\ca(\pi,V)$ the $\ca$-algebra generated by the range of the representation $(V\times \pi)$. Due to the left covariance relation, $\ca(\pi,V)$ is the closure of the polynomials
\begin{align*}
\sum_{n,m} V^n \pi(c_{n,m}) (V^*)^m,\ \ c_{n,m}\in \C,\ \ n,m \in \bbZ_+.
\end{align*}

Let $H_u=\oplus_i H_i$, $\pi_u= \oplus_i \pi_i$ and $V_u=\oplus_i V_i$, where the summand ranges over the left covariant isometric pairs $(\pi_i,V_i)$ that act on Hilbert spaces $H_i$. Then the semicrossed product $\fA(\C,\ga,\text{is})_l$ is the closure of the polynomials
$\sum_{n=0}^k V_u^n \pi_u(c_n),\ c_n\in \C,\ k\in \bbZ_+$.

The $\ca$-algebra $\ca(\pi_u,V_u)$ has the following universal property: for every left covariant isometric pair $(\pi,V)$ there is a $*$-epimorphism $\Phi\colon  \ca(\pi_u,V_u) \rightarrow \ca(\pi,V)$, such that $\Phi\circ \pi_u = \pi$ and $\Phi\circ V_u= V$. (The $*$-epimorphism $\Phi$ is induced by restricting $\pi_u$ and $V_u$ to $H\subseteq H_u$.)

For any $z\in \bbT$ we define a $*$-automorphism $\beta_z$ of $\ca(\pi_u,V_u)$, such that $\beta_z(\pi_u(c))=\pi_u(c)$, $c\in \C$ and $\beta_z(V^n_u)=z^nV^n_u$, $n\in \bbZ_+$. An $\eps/3$-argument, along with the fact that $\ca(\pi_u,V_u)$ is the closed linear span of the monomials $V_u^n\pi(c) (V_u^*)^m$, shows that the family $\{\beta_z\}_{z\in \bbT}$ is point-norm continuous. Thus, we can define the conditional expectation $\E\colon \ca(\pi_u,V_u)\rightarrow \ca(\pi_u,V_u)$, by $\E(F):=\int_\bbT \beta_z(F) dz,\ \ F\in \ca(\pi_u,V_u)$, where $dz$ is Haar measure on the unit circle $\bbT$. The fixed point algebra $\ca(\pi_u,V_u)^{\beta}$, i.e. the range of $\E$, is the closed linear span of $\sum_{n=0}^k V_u ^n\pi_u(c_n) (V_u^*)^n$, $c_n\in \C$. Hence, the fixed point algebra is (the inductive limit) $\overline{\cup_k B_k}$, where $B_k= \overline{\Span}\{ \sum_{n=0}^k V_u^n\pi_u(c_n) (V_u^*)^n: c_n \in \C\}$. It is a routine to check that $\E$ is a norm-continuous, faithful projection onto the fixed point algebra.\\

Now, fix a faithful representation $(H_0,\pi)$ of $\C$, and let $(\widetilde{\pi},S)$ be as in Example \ref{example left pure isometric}. For every $z\in \bbT$ we define the unitary operator $u_z\colon \ell^2(\bbZ_+)\rightarrow\ell^2(\bbZ_+)$, by $u_z(e_n)=z^n e_n$. Let $U_z=1_{H_0} \otimes u_z$; the map $\gamma_z=ad_{U_z}$ satisfies $\gamma_z(\widetilde{\pi}(c))=\widetilde{\pi}(c)$, $c\in \C$ and $\gamma_z(S^n)=z^nS^n$, $n\in \bbZ_+$, hence defines a $*$-automorphism of $\ca(\widetilde{\pi}, S)$. Again, an $\eps/3$-argument shows that $\{\gamma_z\}_{z\in \bbT}$ is a point-norm continuous family, hence induces the conditional expectation $E(F):=\int_\bbT \gamma_z(F) dz$, $F\in \ca(\widetilde{\pi}, S)$. The map $E$ is a norm-continuous faithful projection onto the fixed point algebra $\ca(\widetilde{\pi}, S)^\gamma = \overline{\Span}\{ ^k S^n\pi(c) (S^*)^n: c\in \C\}$.

For the canonical $*$-epimorphism $\Phi\colon \ca(\pi_u,V_u) \rightarrow \ca(\widetilde{\pi}, S)$ we obtain that $\Phi\circ \beta_z= \gamma_z \circ \Phi$, hence $\Phi\circ \E= E \circ \Phi$. Thus the restriction of $\Phi$ on $\ca(\pi_u,V_u)^\beta$ is a $*$-homomorphism onto $\ca(\widetilde{\pi}, S)^\gamma$. The following is a revision of \cite[Theorem 1.4]{Kak09}.

\begin{theorem}\label{left only one}
The $*$-epimorphism $\Phi\colon \ca(\pi_u,V_u)\rightarrow \ca(\widetilde{\pi},S)$ is injective, hence a $*$-isomorphism.
As a consequence, the semicrossed products $\fA(\C,\ga,\text{contr})_l$ and $\fA(\C,\ga,\text{is})_l$ are completely isometrically isomorphic to the closure of the polynomials $\sum_{n=0}^k S^n\widetilde{\pi}(c_n)$, $c_n\in \C$, $($in $\ca(\widetilde{\pi},S) )$, for any faithful representation $(H_0,\pi)$ of $\C$.
\end{theorem}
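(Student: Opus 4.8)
The plan is to show that the canonical $*$-epimorphism $\Phi:\ca(\pi_u,V_u)\rightarrow \ca(\widetilde{\pi},S)$ is injective by exploiting the gauge actions $\{\beta_z\}_{z\in\bbT}$ and $\{\gamma_z\}_{z\in\bbT}$ together with the faithfulness of the associated conditional expectations $\E$ and $E$. This is a standard ``gauge-invariance uniqueness'' strategy: since $\Phi\circ\beta_z=\gamma_z\circ\Phi$ and hence $\Phi\circ\E=E\circ\Phi$ (both already established in the excerpt), injectivity of $\Phi$ on the whole algebra can be reduced to injectivity of $\Phi$ on the fixed point algebra $\ca(\pi_u,V_u)^\beta$. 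Indeed, for a positive element $F\in\ca(\pi_u,V_u)$ with $\Phi(F)=0$, we have $0=E(\Phi(F))=\Phi(\E(F))$, and if we know $\Phi$ is injective on the fixed point algebra we conclude $\E(F)=0$; faithfulness of $\E$ then forces $F=0$. The first thing I would do, therefore, is reduce to showing that $\Phi|_{\ca(\pi_u,V_u)^\beta}$ is an isometry (equivalently injective) onto $\ca(\widetilde{\pi},S)^\gamma$.

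Next I would analyze the fixed point algebras concretely. Both are inductive limits: $\ca(\pi_u,V_u)^\beta=\overline{\cup_k B_k}$ with $B_k=\overline{\Span}\{\sum_{n=0}^k V_u^n\pi_u(c_n)(V_u^*)^n\}$, and similarly on the $\ca(\widetilde{\pi},S)$ side. So it suffices to prove that $\Phi$ is isometric on each $B_k$, and this reduces to an estimate on the monomials $V_u^n\pi_u(c)(V_u^*)^n$. The key structural point, coming from the left covariance relation $\pi(c)V=V\pi(\ga(c))$ (equivalently $V^*\pi(c)=\pi(\ga(c))V^*$ after adjoining and using selfadjointness of $\C$), is that these monomials behave like ``diagonal'' compressions. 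The plan is to introduce the projections $P_n=V_u^n(V_u^*)^n$ (range projections of the powers of the isometry) and to show that the fixed point algebra is filtered by the differences $Q_n=P_n-P_{n+1}$, so that elements of $B_k$ decompose into pieces supported on the mutually orthogonal subspaces $Q_n H_u$. On each such piece the element acts, via the covariance relation, as a copy of $\pi_u(\ga^j(c))$ for appropriate $j$; the same decomposition holds verbatim for $(\widetilde{\pi},S)$ because $S$ is the \emph{pure} (totally non-unitary) isometry $1\otimes s$, whose range projections give exactly the same filtration.

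The heart of the argument—and the step I expect to be the main obstacle—is showing that $\Phi$ does not collapse any of these diagonal pieces, i.e. that the norm of $\sum_{n=0}^k V_u^n\pi_u(c_n)(V_u^*)^n$ is already captured by the model $(\widetilde{\pi},S)$. The danger is that $V_u$ may have a unitary part or may fail to be pure, so that $\ca(\pi_u,V_u)$ could contain ``extra'' representations not visible in the purely isometric model; one must rule out that these extra representations enlarge the norm on the fixed point algebra. I would handle this by using a Wold-type decomposition of the isometry $V_u$ into its unitary part and a pure (shift) part, and observing that on the fixed point algebra the relevant norms are governed only by the values $\pi_u(\ga^j(c))$, which in turn—by injectivity of $\ga$ (we have reduced to this case, so $\C$ embeds in $\C_\infty$ and $\ga^j$ is isometric on $\C$)—are controlled by a single faithful representation $\pi$ of $\C$. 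Concretely, because $(H_0,\pi)$ is faithful, the diagonal representation $\widetilde{\pi}=\diag\{\pi(\ga^n(c))\}$ already realizes the supremum of $\nor{\pi_u(\ga^j(c))}$ over all the summands, and since $\ga$ is injective each $\ga^j$ is isometric so no loss occurs. This gives $\nor{\Phi(x)}=\nor{x}$ for $x$ in the dense subalgebra $\cup_k B_k$ of the fixed point algebra, hence on all of it, completing the reduction. Once $\Phi$ is known to be a $*$-isomorphism, the final assertion about the semicrossed products is immediate: $\fA(\C,\ga,\text{contr})_l\simeq\fA(\C,\ga,\text{is})_l$ by Proposition \ref{left isomorphic 2}, and the latter is by construction the closure of $\sum_{n=0}^k V_u^n\pi_u(c_n)$ inside $\ca(\pi_u,V_u)$, which $\Phi$ maps completely isometrically onto the closure of $\sum_{n=0}^k S^n\widetilde{\pi}(c_n)$ inside $\ca(\widetilde{\pi},S)$.
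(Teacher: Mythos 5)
Your opening reduction --- using $\Phi\circ\E=E\circ\Phi$ and the faithfulness of $\E$ to reduce injectivity of $\Phi$ to injectivity of its restriction to the fixed point algebra $\ca(\pi_u,V_u)^\beta=\overline{\cup_k B_k}$ --- coincides with the paper's own argument and is fine. The gap is in the step you yourself flag as the heart of the matter. First, you lean on ``injectivity of $\ga$ (we have reduced to this case)'', but no such reduction is available for $\fA(\C,\ga,\text{contr})_l$ and $\fA(\C,\ga,\text{is})_l$: unlike the co-isometric and unitary semicrossed products of section \ref{right}, here the seminorms are already norms (example \ref{example left pure isometric}), there is no quotient by $\R_\ga$, and the theorem is stated and later used in full generality --- indeed in section \ref{overview} it is invoked to show that $\fA(\C,\ga,\text{is})_l$ contains a copy of $\C$ precisely when $\ga$ is \emph{not} injective. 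Second, and more seriously, your crucial estimate is asserted rather than proved: left covariant \emph{unitary} pairs occur among the summands of $(\pi_u,V_u)$, for such summands the range projections $P_n=V_u^n(V_u^*)^n$ are all $1$ and your filtration $Q_n=P_n-P_{n+1}$ vanishes identically, so the claim that these summands do not enlarge $\|\sum_{n=0}^k V_u^n\pi_u(c_n)(V_u^*)^n\|$ beyond its value in the model $(\widetilde{\pi},S)$ is exactly what needs an argument. It can be supplied: for a unitary pair $(\pi',U)$ the covariance relation gives $U^n\pi'(c_n)(U^*)^n=U^k\pi'(\ga^{k-n}(c_n))(U^*)^k$ for $n\le k$, hence $\sum_{n=0}^kU^n\pi'(c_n)(U^*)^n=U^k\pi'(d_k)(U^*)^k$ with $d_k=\sum_{n=0}^k\ga^{k-n}(c_n)$, and $\|\pi'(d_k)\|\le\|d_k\|=\|\pi(d_k)\|$, which is a diagonal entry of the image in the model; a companion wandering-subspace computation (using that $V_uV_u^*$ commutes with $\pi_u(\C)$) handles the pure part. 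But none of this uses injectivity of $\ga$, and as written your plan does not close.

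For comparison, the paper's proof avoids the Wold decomposition and any norm estimate over arbitrary covariant pairs. It works entirely inside the concrete model: $\Phi(F)=\sum_{n=0}^kS^n\widetilde{\pi}(c_n)(S^*)^n$ is a diagonal operator whose $(m,m)$ entry is $\pi\bigl(\ga^m(c_0)+\ga^{m-1}(c_1)+\cdots+c_m\bigr)$ for $m\le k$; reading off the $(0,0)$ entry gives $\pi(c_0)=0$, hence $c_0=0$ by faithfulness of $\pi$, then the $(1,1)$ entry gives $c_1=0$, and the triangular induction kills every coefficient with no hypothesis on $\ga$ at all. You should either adopt that computation or complete the unitary-part estimate sketched above; in its current form the central step of your proposal is a genuine gap.
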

\begin{proof}
First we show that the restriction of $\Phi$ to $\ca(\pi_u,V_u)^\beta$ is injective. Assume that $\ker(\Phi|_{\ca(\pi_u,V_u)^\beta})$ is not trivial. Since $\ca(\pi_u,V_u)^\beta$ is an inductive limit there is a $k$ such that $B_{k} \cap \ker(\Phi|_{\ca(\pi_u,V_u)^\beta}) \neq (0)$, thus there is a non-zero analytic polynomial $F=\sum_{n=0}^k V_u^n\widetilde{\pi_u}(c_n)(V^*_u)^n$ such that $\Phi(F)= \sum_{n=0}^k S^n\widetilde{\pi}(c_n)(S^*)^n =0$. Note that
\[
S^n\widetilde{\pi}(c)(S^*)^n
=
\diag \{\underbrace{0,\dots,0}_{\text{n-times}},\pi(c), \pi(\ga(c)),\dots\},
\]
hence the $(m,m)$-element $(\Phi(F))_{m,m}$ of $\Phi(F)$ equals to
\begin{align*}
(\Phi(F))_{m,m}&=
\begin{cases}
   \pi \left(\ga^m(c_0)+\ga^{m-1}(c_1)+\dots +c_m \right)
   &,\text{for}\ m<k,\\
   \pi \left(\ga^m(c_0)+\ga^{m-1}(c_1)+\dots
+\ga^{m-k}(c_k) \right) &,\text{for}\ m\geq k,
\end{cases}
\\
& =
\pi \left( \sum_{j=0}^{\min\{m,k\}} \ga^{m-j}(c_{m-j}) \right).
\end{align*}
Since $\Phi(F)=0$, we obtain $(\Phi(F))_{0,0}=0$, hence $\pi(c_0)=0$. Since $(H_0,\pi)$ is injective we get that $c_0=0$. Thus $(\Phi(F))_{1,1}= \pi(c_1)$ and arguing as previously we obtain $c_1=0$. The repetition of the argument shows that $c_m=0$ for all $m=0,1,\dots,k$, hence $F= \sum_{n=0}^k V_u^n\widetilde{\pi_u}(c_n)(V^*_u)^n=0$, which is a contradiction. Thus the restriction of $\Phi$ to $\ca(\pi_u,V_u)^\beta$ is injective.

Now, let $F\in\ker\Phi$, then $F^*F\in\ker\Phi$. Hence, $\Phi\circ\E(F^*F)=E\circ \Phi(F^*F)=0$. But, $\E(F^*F)\in \ca(\pi_u,V_u)^\beta$, and the restriction of $\Phi$ to $\ca(\pi_u,V_u)^\beta$ is injective; thus $\E(F^*F)=0$. So, $F=0$, since $\E$ is faithful.
\end{proof}

The next Theorem follows by Lemma \ref{duality} as used in Proposition \ref{C*-envelope left co-isometric}.

\begin{theorem}
The semicrossed products $\fA(\C,\ga,\text{contr})_r$ and $\fA(\C,\ga,\text{co-is})_r$ are completely isometrically isomorphic to the closed linear span of the polynomials $\sum_{n=0}^k \widetilde{\pi}(c_n)(S^*)^n$, $c_n\in \C$, for any faithful representation $(H,\pi)$ of $\C$ and $(\widetilde{\pi},S)$ as in the Example \ref{example left pure isometric}.
\end{theorem}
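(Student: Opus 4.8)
The plan is to mirror proposition \ref{C*-envelope left co-isometric}, transporting the left-hand model of theorem \ref{left only one} to the right-hand side through the duality $^\#$. First I would fix a faithful representation $(H_0,\pi)$ of $\C$ and form $(\widetilde{\pi},S)$ as in example \ref{example left pure isometric}, a \emph{left} covariant isometric pair. Since $S$ is an isometry, $S^*$ is a co-isometry, and taking adjoints in the left covariance relation $\widetilde{\pi}(c)S=S\widetilde{\pi}(\ga(c))$ (and using that $\C$ is self-adjoint) gives $S^*\widetilde{\pi}(c)=\widetilde{\pi}(\ga(c))S^*$ for all $c\in\C$. Thus $(\widetilde{\pi},S^*)$ is a \emph{right} covariant co-isometric pair, hence also a right covariant contractive pair; so it lies in both $\F_{2,r}$ and $\F_{1,r}$. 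Its induced representation $\sigma:=(\widetilde{\pi}\times S^*)$ satisfies $\sigma(c\otimes\gd_n)=\widetilde{\pi}(c)(S^*)^n$, whose range is the closed span of the polynomials in the statement.

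The crux is the identity $\sigma^\#=(S\times\widetilde{\pi})$, i.e.\ that $\sigma$ dualizes precisely to the left model of theorem \ref{left only one}. One computes
\begin{align*}
\sigma^\#(\gd_n\otimes c) &=\sigma\big((\gd_n\otimes c)^\#\big)^*=\sigma(c^*\otimes\gd_n)^*\\
&=\big(\widetilde{\pi}(c^*)(S^*)^n\big)^*=S^n\widetilde{\pi}(c)=(S\times\widetilde{\pi})(\gd_n\otimes c).
\end{align*}
Next, for $t\in\{1,2\}$ and $[F_{ij}]\in\M_\nu(\ell^1(\bbZ_+,\C,\ga)_r)$, I would use that $^\#$ is a matrix-norm-preserving bijection of $\F_{t,r}$ onto $\F_{t,l}$ (lemma \ref{duality}) to get
\begin{align*}
\nor{[F_{ij}]}_\nu &=\sup_{\rho\in\F_{t,r}}\nor{[\rho(F_{ij})]}\\
&=\sup_{\tau\in\F_{t,l}}\nor{[\tau(F_{ij}^\#)]}=\nor{[F_{ij}^\#]}_\nu,
\end{align*}
the right-hand side being the $\nu$-norm of $\fA(\C,\ga,\text{contr})_l$ when $t=1$ and of $\fA(\C,\ga,\text{is})_l$ when $t=2$.

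Finally, theorem \ref{left only one} realizes both of these left $\nu$-norms completely isometrically through the left model, so $\nor{[F_{ij}^\#]}_\nu=\nor{[(S\times\widetilde{\pi})(F_{ij}^\#)]}$; substituting $(S\times\widetilde{\pi})=\sigma^\#$ and applying lemma \ref{duality} to $\sigma$ (with $(F_{ij}^\#)^\#=F_{ij}$) collapses this to $\nor{[\sigma(F_{ij})]}=\nor{[(\widetilde{\pi}\times S^*)(F_{ij})]}$. Chaining the equalities yields $\nor{[F_{ij}]}_\nu=\nor{[(\widetilde{\pi}\times S^*)(F_{ij})]}$ for all $\nu$ and all $[F_{ij}]$, which is exactly complete isometry of $\sigma$ on both $\fA(\C,\ga,\text{contr})_r$ and $\fA(\C,\ga,\text{co-is})_r$ onto the stated polynomial span. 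I expect the only delicate point to be the bookkeeping of the duality pairing---contractive with contractive, but left-isometric with right-co-isometric---together with verifying the single identity $\sigma^\#=(S\times\widetilde{\pi})$; once these are settled the theorem follows formally from lemma \ref{duality} and theorem \ref{left only one}, with no new dilation argument required.
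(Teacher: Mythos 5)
Your proposal is correct and is precisely the argument the paper intends: the paper's entire proof is the remark ``By using lemma \ref{duality} as in proposition \ref{C*-envelope left co-isometric}, we get the next theorem,'' i.e.\ exactly the $^\#$-transport of theorem \ref{left only one} that you carry out, including the key identity $(\widetilde{\pi}\times S^*)^\#=(S\times\widetilde{\pi})$ and the pairing $\F_{1,r}\leftrightarrow\F_{1,l}$, $\F_{2,r}\leftrightarrow\F_{2,l}$. Your bookkeeping of the contractive/co-isometric correspondence matches the paper's conventions, so nothing is missing.
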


We now proceed to the determination of the $\ca$-envelope of the semicrossed products $\fA(\C,\ga,\text{contr})_l$ and $\fA(\C,\ga,\text{is})_l$. As mentioned in the introduction, in \cite{Petarx} Peters computes the $\ca$-envelope of $\fA(\C,\ga,\text{is})_l$, when $\C$ is commutative and $\ga$ is injective. In \cite{KakKat10} we prove a similar Theorem without the assumption of commutativity, but still assuming that $\ga$ is a $*$-automorphism. In \cite{KakKat11} with Elias Katsoulis we gave the result for the general case in the context of $\ca$-correspondences, by extending the method of ``adding tails'' introduced in \cite{MuTom04}. In particular \cite[Proposition 3.12]{KakKat11} shows the necessity of that extension.\\

Let $\M\equiv \M(\ker\ga)$ be the multiplier algebra of $\ker\ga$, and $\theta\colon \C \rightarrow \M$ be the unique unital $*$-homomorphism extending the natural embedding $\ker\ga\hookrightarrow \M$. Also, consider the $\ca$-algebra $T=c_0(\theta(\C))$; we use the letters $\tx, \ty$, etc. for the elements $(x_n), (y_n) \in T$ and the symbol $\0$ for the zero sequence $(0)\in T$. For the $\ca$-algebra $\B=\C\oplus T$ we define the map $\beta\colon \B\rightarrow \B$ by
\begin{align*}
\beta(c,\tx)\equiv \beta(c,(x_n))=(\ga(c), \theta(c), x_1,x_2,\dots)\equiv (\ga(c),\theta(c),\tx),
\end{align*}
for every $c\in \C$ and $\tx\equiv (x_n)\in c_0(\theta(\C))$. Note that $\B$ contains $\C$, but $\beta$ does not extend $\ga$. Also, $\beta$ is an injective $*$-homomorphism. Indeed, let $(c,\tx)\in\ker\beta$; then $x_n=0$, $\theta(c)=0$ and $\ga(c)=0$. Thus, $c\in\ker\ga$, so $c=\theta(c)=0$. Hence $(c,\tx)=0$. Finally, if $e$ is the unit of $\C$, we have
\begin{equation}\label{eq:unit}
\beta^m(e,\0)(c,\0)
=
(e,\underbrace{1_\M,\dots,1_\M}_{\text{m-times}},0,\dots)(c,\0)
=
(c,\0),
\end{equation}
for every $m\in \bbZ_+$.

For the $*$-automorphism $\beta_\infty\colon \B_\infty \rightarrow \B_\infty$ (which is an extension of $\beta$), consider the crossed product
\begin{align*}
\B_\infty \rtimes_{\beta_\infty} \bbZ
=
\overline{\Span}\{U^n\widehat{\pi}(y): y\in \B_\infty, n\in \bbZ\},
\end{align*}
where $(\widehat{\pi},U)$ is the pair that induces the left regular representation of $(\B_\infty,\beta_\infty)$. Since $\B_\infty = \overline{ \cup_n \beta_\infty^{-n}(\B)}$, and due to the left covariance relation, we get
\begin{align*}
\B_\infty \rtimes_{\beta_\infty} \bbZ
=
\overline{\Span}\{U^n\widehat{\pi}(b)(U^*)^m: n,m\in \bbZ_+, b\in \B\}.
\end{align*}

From now on, fix $\fD$ be the $\ca$-subalgebra of the crossed product generated by $U\widehat{\pi}(e,\0)$ and $\widehat{\pi}(c,\0)$, $c\in \C$. Then $\fD$ is the closed linear span of the monomials $U^n\widehat{\pi}(c,\0)(U^*)^m$, $n,m\in \bbZ_+$, $c\in C$. Also, note that
\begin{equation}\label{eq: u}
U\widehat{\pi}(e,\0)=\widehat{\pi}(e,\0) U\widehat{\pi}(e,\0).
\end{equation}

\begin{lemma}\label{lemma head}\textup{\cite[Lemma 3.4]{KakKat11}}
Every element $U^n\widehat{\pi}(b)(U^*)^n$, $b\in \B$, can be written as $A+\widehat{\pi}(0,\ty)$, where $A\in\fD$.
\end{lemma}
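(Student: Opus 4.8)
The plan is to prove Lemma \ref{lemma head} by induction on $n$. The statement asserts that the ``diagonal'' element $U^n\widehat{\pi}(b)(U^*)^n$ for $b\in\B$ can be absorbed into $\fA$ up to a correction term of the form $\widehat{\pi}(0,\ty)$, i.e. a term supported entirely on the tail part $T$. The base case $n=0$ is immediate: writing $b=(c,\tx)$ we have $\widehat{\pi}(b)=\widehat{\pi}(c,\0)+\widehat{\pi}(0,\tx)$, and $\widehat{\pi}(c,\0)\in\fA$ by definition, while $\widehat{\pi}(0,\tx)$ is already of the desired form with $\ty=\tx$.

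For the inductive step I would exploit the action of $\beta$ on $\B$ together with the covariance relation $U\widehat{\pi}(b)=\widehat{\pi}(\beta_\infty(b))U$, valid for $b\in\B$ since $\beta_\infty$ extends $\beta$. First I would rewrite
\begin{align*}
U^n\widehat{\pi}(b)(U^*)^n = U^{n-1}\bigl(U\widehat{\pi}(b)U^*\bigr)(U^*)^{n-1}
= U^{n-1}\widehat{\pi}\bigl(\beta(b)\bigr)(U^*)^{n-1},
\end{align*}
using $U\widehat{\pi}(b)U^*=\widehat{\pi}(\beta(b))$. Now for $b=(c,\tx)$ the definition of $\beta$ gives $\beta(c,\tx)=(\ga(c),\theta(c),\tx)$, so I would split $\widehat{\pi}(\beta(b))$ into the ``head'' piece $\widehat{\pi}(\ga(c),\0)$, which lies in $\fA$, plus a tail piece $\widehat{\pi}(0,(\theta(c),\tx))$. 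The head contributes $U^{n-1}\widehat{\pi}(\ga(c),\0)(U^*)^{n-1}$, and since $\widehat{\pi}(\ga(c),\0)\in\fA$ this diagonal term is itself an element of $\fA$ (being a monomial $U^{n-1}\widehat{\pi}(\cdot)(U^*)^{n-1}$ with head-supported middle). The tail piece gives $U^{n-1}\widehat{\pi}(0,\ty')(U^*)^{n-1}$ for some $\ty'\in T$, to which I apply the induction hypothesis.

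The main obstacle is verifying that conjugating a purely tail-supported element $\widehat{\pi}(0,\ty')$ by $U^{n-1}$ again reduces, via the induction hypothesis, to the form $A+\widehat{\pi}(0,\ty)$ rather than producing head contributions that escape $\fA$. The key is that applying $\beta$ to a tail element $(0,(x_1,x_2,\dots))$ merely shifts the sequence, yielding $(0,\theta(0),x_1,x_2,\dots)=(0,0,x_1,x_2,\dots)$, which is again purely tail-supported; thus iterating the covariance relation on a tail element never creates a $\C$-component and the head stays trivial. Consequently each application of the inductive hypothesis to $U^{n-1}\widehat{\pi}(0,\ty')(U^*)^{n-1}$ returns an element $A'+\widehat{\pi}(0,\ty)$ with $A'\in\fA$, and combining $A'$ with the head contribution already in $\fA$ completes the induction. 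I would close by remarking that relation (\ref{eq: u}) and the structure of $T=c_0(\theta(\C))$ guarantee all the required monomials genuinely sit inside $\fA$.
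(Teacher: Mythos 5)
There is a genuine gap: you have the direction of the conjugation by $U$ reversed. In this paper the pair $(\widehat{\pi},U)$ coming from the left regular representation satisfies $\widehat{\pi}(y)U=U\widehat{\pi}(\beta_\infty(y))$, i.e. $U\widehat{\pi}(y)U^*=\widehat{\pi}(\beta_\infty^{-1}(y))$ (this is what the matrix picture of $\widehat{\pi}$ and the bilateral shift gives, and it is the identity used repeatedly in this section, e.g.\ $\widehat{\pi}(\beta_\infty(e,\0))=U^*\widehat{\pi}(e,\0)U$). Your step $U\widehat{\pi}(b)U^*=\widehat{\pi}(\beta(b))$ is therefore false; the correct statement is $U^n\widehat{\pi}(b)(U^*)^n=\widehat{\pi}(\beta_\infty^{-n}(b))$. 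This is not a cosmetic issue: under your reading the element to be decomposed is $\widehat{\pi}(\beta^n(b))$, which lies in $\widehat{\pi}(\B)$ and is handled by your base case alone, so your induction proves (a rearrangement of) the trivial $n=0$ statement. The actual content of the lemma is that $\widehat{\pi}(\beta_\infty^{-n}(b))$ — an element of the direct limit that is \emph{not} in $\widehat{\pi}(\B)$ — still splits as $A+\widehat{\pi}(0,\ty)$ with $A\in\fA$; this is exactly what is needed later to show $p(\B_\infty\rtimes_{\beta_\infty}\bbZ)p\subseteq\fA$.

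The missing idea is how to invert $\beta$ on a tail element. Writing $b=(c,\tx)$, the head part $U\widehat{\pi}(c,\0)U^*$ is already a monomial of $\fA$, but for the tail part one must choose $c'\in\C$ with $\theta(c')=x_1$ (possible since the entries of $\tx$ lie in $\theta(\C)$) and observe that
\begin{align*}
\beta_\infty^{-1}(0,\tx)=(c',x_2,x_3,\dots)-\beta_\infty^{-1}(\ga(c'),\0),
\end{align*}
so that $U\widehat{\pi}(0,\tx)U^*=\widehat{\pi}(c',\0)-U\widehat{\pi}(\ga(c'),\0)U^*+\widehat{\pi}(0,x_2,x_3,\dots)$, i.e.\ applying $\beta_\infty^{-1}$ to a pure tail element \emph{does} create a head contribution, which must be absorbed into $\fA$ by hand. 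This lifting step is the heart of the proof and is absent from your argument; your claim that ``iterating the covariance relation on a tail element never creates a $\C$-component'' is precisely what fails once the conjugation goes in the correct direction. With this correction the induction on $n$ proceeds as you outline structurally, but each step produces new head terms of the form $\widehat{\pi}(c',\0)-U\widehat{\pi}(\ga(c'),\0)U^*$ that must be collected into $A$.
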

\begin{proof}
For $n=0$, we have $\widehat{\pi}(b)=\widehat{\pi}(c,\tx) =\widehat{\pi}(c,\0)+\widehat{\pi}(0,\tx)$. For $n=1$,
\begin{align*}
U\widehat{\pi}(b)U^*
& =
U\widehat{\pi}(c,\tx)U^*
  =
U\widehat{\pi}(c,\0)U^* + U\widehat{\pi}(0,\tx)U^*.
\end{align*}
Let $c'\in \C$ such that $\theta(c')=x_1$; then
\begin{align*}
\beta_\infty(c',x_2,x_3,\dots)
& =
\beta(c',x_2,x_3,\dots) \\
& =
(\ga(c'),\theta(c'),x_2,\dots)= (\ga(c'),\0)+(0,\tx).
\end{align*}
Thus $(c',x_2,x_3,\dots)=\beta_\infty^{-1}(\ga(c'),\0)+ \beta_\infty^{-1}(0,\tx)$, hence
\begin{align*}
U\widehat{\pi}(0,\tx)U^*
& =
\widehat{\pi}(\beta_\infty^{-1}(0,\tx))
  =
\widehat{\pi}(c',x_2,\dots)-\widehat{\pi} (\beta_\infty^{-1}(\ga(c'),\0))\\
& =
\widehat{\pi}(c',\0) - U\widehat{\pi}(\ga(c'),\0)U^* + \widehat{\pi}(0,x_2,\dots).
\end{align*}
Thus, $U\widehat{\pi}(b)U^*=A+\widehat{\pi}(0,x_2,\dots)$.

The proof is completed by using induction on $n$.
\end{proof}

\begin{proposition}\textup{\cite[Lemma 3.7]{KakKat11}}
The semicrossed product $\fA(\C,\ga,\text{is})_l$ is completely isometrically isomorphic to the closure of the polynomials of the form $\sum_{n=0}^k U^n \widehat{\pi}(c_n,\0)$. Hence, $\fD$ is a $\ca$-cover of $\fA(\C,\ga,\text{is})_l$.
\end{proposition}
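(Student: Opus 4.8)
The plan is to reduce everything to the pure isometric model of Theorem \ref{left only one}. Write $\fA_0$ for the closed subalgebra $\overline{\Span}\{\sum_{n=0}^k U^n\widehat{\pi}(c_n,\0):c_n\in\C,\ k\in\bbZ_+\}$ of $\B_\infty\rtimes_{\beta_\infty}\bbZ$ whose completely isometric identification with $\fA(\C,\ga,\text{is})_l$ we must establish. By Theorem \ref{left only one}, $\fA(\C,\ga,\text{is})_l$ is, for every faithful representation $\pi$ of $\C$, completely isometrically isomorphic to the closure of the polynomials $\sum_n S^n\widetilde{\pi}(c_n)$ inside $\ca(\widetilde{\pi},S)$. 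It therefore suffices to produce, for a suitable faithful $\pi$, a completely isometric isomorphism between that pure isometric model and $\fA_0$ sending $\sum_n S^n\widetilde{\pi}(c_n)$ to $\sum_n U^n\widehat{\pi}(c_n,\0)$.

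First I would verify that the pair $(\pi_0,V_0)$ given by $\pi_0(c)=\widehat{\pi}(c,\0)$ and $V_0=U\widehat{\pi}(e,\0)$, compressed to the range of the projection $P=\widehat{\pi}(e,\0)$, is a left covariant isometric pair of $(\C,\ga)$. Equation (\ref{eq: u}) says precisely that the range of $P$ is invariant under $U$, so on it $V_0$ acts as an isometry ($V_0^*V_0=\widehat{\pi}(e,\0)=P$), while the left covariance relation $\pi_0(c)V_0=V_0\pi_0(\ga(c))$ follows from the covariance relation $\widehat{\pi}(y)U=U\widehat{\pi}(\beta_\infty(y))$ of the left regular representation (in the form forced by (\ref{eq: u})) together with the identity $\beta(c,\0)(e,\0)=(\ga(c),\0)$, i.e. the added tail is annihilated on the right by $(e,\0)$. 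Since the range of $P$ is $U$-invariant and $\widehat{\pi}(c,\0)=P\widehat{\pi}(c,\0)P$, each $\sum_{n=0}^k U^n\widehat{\pi}(c_n,\0)$ lies in the corner $P(\B_\infty\rtimes_{\beta_\infty}\bbZ)P$; hence --- and verbatim for every matrix ampliation --- its norm equals that of $(V_0\times\pi_0)$ applied to the same element on the range of $P$. By the universal property defining $\fA(\C,\ga,\text{is})_l$ this is dominated by the universal norm, so the map $\sum_n S^n\widetilde{\pi}(c_n)\mapsto\sum_n U^n\widehat{\pi}(c_n,\0)$ is completely contractive.

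The main obstacle is the reverse inequality, namely that this completely contractive map is actually a complete isometry; this is where the tail does its work. I would fix a faithful representation $\rho$ of $\B_\infty$, realise $(\widehat{\pi},U)$ as the associated left regular representation on $\bigoplus_{n\in\bbZ}H_\rho$, set $q=\rho(e,\0)$, and pass to the subspace $K=\bigoplus_{n\geq 0}qH_\rho$ (the copy of $qH_\rho$ in each non-negative summand). The key points to check are that $K$ is invariant under both $V_0$ and $\pi_0(\C)$, that $V_0|_K$ is a pure shift, and that the compression decouples the tail: from $(e,\0)\,\beta^n(c,\0)\,(e,\0)=(\ga^n(c),\0)$ one obtains $\pi_0(c)|_K=\diag\{\pi'(\ga^n(c)):n\geq 0\}$, where $\pi'(c)=\rho(c,\0)|_{qH_\rho}$ is a faithful representation of $\C$. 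Thus the compression of $\fA_0$ to $K$ is exactly the pure isometric model $(\widetilde{\pi'},S)$ of Example \ref{example left pure isometric}, and it inverts the completely contractive map of the previous paragraph; consequently that map is a complete isometry. The delicate step is the decoupling identity, which is exactly the reason for replacing $\ga$ by the injective $\beta$ and adding the tail $T$.

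It remains to record the cover statement. Since $\fA_0\subseteq\fA$ and $\fA_0$ already contains the two generators $\widehat{\pi}(c,\0)$ (take $k=0$) and $U\widehat{\pi}(e,\0)$ (take $k=1$, $c_0=0$, $c_1=e$) of $\fA$, we have $\ca(\fA_0)=\fA$. Together with the completely isometric identification $\fA(\C,\ga,\text{is})_l\cong\fA_0$ established above, this shows that $\fA$ is a $\ca$-cover of $\fA(\C,\ga,\text{is})_l$.
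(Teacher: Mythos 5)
Your argument is correct, and it shares its first half with the paper: both exhibit $(\widehat{\pi}(\cdot,\0),\,U\widehat{\pi}(e,\0))$ as a covariant pair using $\beta(c,\0)(e,\0)=(\ga(c),\0)$ together with equation (\ref{eq: u}), and deduce complete contractivity of $\gd_n\otimes c\mapsto U^n\widehat{\pi}(c,\0)$ from the universal property. Where you genuinely diverge is the reverse inequality. The paper compresses only to $H_0\otimes\ell^2(\bbZ_+)$, forms the $\ca$-algebra $\fC$ generated by the compressed pair --- whose diagonal entries $\pi(\beta_\infty^n(c,\0))$ still carry the tail --- and proves injectivity of the canonical *-epimorphism $\ca(\pi_u,V_u)\rightarrow\fC$ by rerunning the gauge-action and fixed-point-algebra induction of theorem \ref{left only one} mutatis mutandis. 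You compress one step further, by $q=\rho(e,\0)$ in each non-negative summand; the identity $(e,\0)\beta^n(c,\0)(e,\0)=(\ga^n(c),\0)$ then strips the tail entirely and lands you verbatim in the model of example \ref{example left pure isometric} for the faithful representation $\pi'=\rho(\cdot,\0)|_{qH_\rho}$ of $\C$, so theorem \ref{left only one} can be cited as a black box rather than re-proved. (All the small verifications you flag do go through: $K$ is invariant under $V_0$ and reducing for $\pi_0(\C)$, $V_0|_K$ is the unilateral shift by equation (\ref{eq:unit}), and $\pi'$ is faithful because $\rho(c,\0)=q\rho(c,\0)q$.) Your route is shorter and isolates cleanly why the tail is invisible on the corner; the paper's route yields in passing the stronger $\ca$-level fact that $\fC\simeq\ca(\pi_u,V_u)$, which is not needed for the proposition itself. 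Your closing observation that the closure of the polynomials already contains the two generators of $\fA$, so that it generates $\fA$ as a $\ca$-algebra, is the (elementary) step the paper leaves implicit in asserting the cover statement.
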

\begin{proof}
Let $(H_0,\pi)$ be a faithful representation of $\B_\infty$ and let $(\widehat{\pi}, U)$ be the unitary covariant pair in $H=H_0 \otimes \ell^2(\bbZ)$, that gives the left regular representation of the crossed product. For simplicity, let $\phi$ be the representation of $\C$ given by $\phi(c):= \widehat{\pi}(c,\0)$.
Then $(\phi, U\phi(e))$ is a left covariant contractive pair for $(\C,\ga)$. Indeed,
\begin{align*}
\phi(c) \cdot U \phi(e)
& =
\widehat{\pi}(c,\0)U \cdot \widehat{\pi}((e,\0))
  =
U\widehat{\pi}(\beta_\infty(c,\0))\cdot \widehat{\pi}((e,\0))\\
& =
U\widehat{\pi}\left(\beta(c,\0)(e,\0)\right)
  =
U \widehat{\pi}(\ga(c),\0)
  =
U\phi(e)\cdot \phi(\ga(c)).
\end{align*}
Hence, for every $\nu\geq 1$ and $[F_{ij}] \in \M_\nu(\ell^1(\bbZ_+,\C,\ga))$, we have that
\begin{equation}\label{eq:inequality}
\nor{\big[ \left(U\phi(e)\times \phi\right)(F_{ij})\big]}
\leq
\nor{ [F_{ij}]}_\infty.
\end{equation}
Using equation (\ref{eq: u}), we note that
\begin{align*}
(U\phi(e)\times \phi)(\gd_n\otimes c)
=
(U\phi(e))^n \phi(c)=U^n
\phi(c),
\end{align*}
for every $n\in \bbZ_+$ and $c\in \C$. Thus, we have to prove that, for every $\nu\geq 1$, the inequality (\ref{eq:inequality}) is an equality.

Let $K=[e_n:n\geq 0] \subseteq \ell^2(\bbZ)$. It is easy to see that $H_0\otimes K$ is a reducing subspace for $(H,\phi)$. Then the restriction $\phi|_{H_0\otimes K}$ of $\phi$ to $H_0\otimes K$ is faithful representation, since its $(0,0)$-entry is the faithful representation $\pi$. Moreover, the pair $(P_{H_0\otimes K} U\phi(e) |_{H_0\otimes K}, \phi|_{H_0\otimes K})$ satisfies the left covariance relation for $(\C,\ga)$.
We note that
\begin{align*}
P_{H_0\otimes K}U|_{H_0\otimes K}
=
(1_{H_0}\otimes P_K )(1_{H_0} \otimes u) |_{H_0\otimes K}
=
1_{H_0}\otimes s=S,
\end{align*}
hence, using induction on the relation (\ref{eq: u}), we get that
\begin{align*}
\big(P_{H_0\otimes K} U\phi(e)|_{H_0\otimes K}\big)^n
=
\big(P_{H_0\otimes K} UP_{H_0\otimes K}\big)^n \phi(e)|_{H_0\otimes K}
=
S^n \phi(e)|_{H_0\otimes K}.
\end{align*}

Let $\fC$ be the $\ca$-algebra $\ca\big(\phi|_{H_0\otimes K},P_{H_0\otimes K}U\phi(e)|_{H_0\otimes K}\big)$ and $\Phi$ be the $*$-epimorphism from $\ca(\pi_u,V_u)$ onto $\fC$. We will show that $\Phi$ is injective; as in the proof of Theorem \ref{left only one} it suffices to show that the restriction of $\Phi$ to the fixed point algebra is injective. To this end, for $z\in \bbT$, let $\gm_z=ad_{U_z}$, where $U_z(\xi\otimes e_k)= z^k\xi\otimes e_k$, $k\in \bbZ_+$. Then $\gm_z$ is a $*$-automorphism of $\fC$. Since
\begin{align*}
& \big(P_{H_0\otimes K}\big( U\phi(e)\big)|_{H_0\otimes K}\big)^n \phi(c)|_{H_0\otimes K} \big((P_{H_0\otimes K} \big( U\phi(e)\big)|_{H_0\otimes  K})^*\big)^n\\
& \qquad
= S^n \phi(e) \phi(c)|_{H_0\otimes K} \phi(e) (S^*)^n
= S^n\phi(c)|_{H_0\otimes K} (S^*)^n\\
& \qquad = \diag \{\underbrace{0,\dots,0}_{\text{n-times}}, \pi(c,\0),\pi(\beta_\infty(c,\0)),\dots\},
\end{align*}
we get that the fixed point algebra of $\fC$ is the closed linear span of these monomials. Recall that $(H_0,\pi)$ is a faithful representation of $\C$; hence we can follow mutatis mutandis the arguments of the proof of Theorem \ref{left only one}, and conclude that the restriction of $\Phi$ to the fixed point algebra $\fC^\gm$ is injective

Thus, for every $F\in \ell^1(\bbZ_+,\C,\ga)_l$,
\begin{align*}
\nor{F}_\infty
& =
\nor{ \bigg((P_{H_0\otimes K}U\phi(e)|_{H_0\otimes K})\times \phi|_{H_0\otimes K}\bigg)(F)}.
\end{align*}
But,
\begin{align*}
\bigg((P_{H_0\otimes K}U\phi(e)|_{H_0\otimes K})\times \phi|_{H_0\otimes K}\bigg)
=
P_{H_0\otimes K}\bigg(U\phi(e)\times \phi\bigg)|_{H_0\otimes K}
\end{align*}
and eventually, for any $F\in \ell^1(\bbZ_+,\C,\ga)_l$, we obtain
\begin{align*}
\nor{F}_\infty
& =
\nor{ \bigg((P_{H_0\otimes K}U\phi(e)|_{H_0\otimes K})\times \phi|_{H_0\otimes K}\bigg)(F)}\\
& =
\nor{ P_{H_0\otimes K} \bigg(U\phi(e)\times\phi\bigg)|_{H_0\otimes K}(F)}\\
& \leq
\nor{\big(U\phi(e)\times\phi\big)(F)} \leq \nor{F}_\infty.
\end{align*}
Hence, $\nor{F}_\infty= \nor{\big(U\phi(e)\times \phi\big)(F)}$, for $F\in \ell^1(\bbZ_+,\C,\ga)_l$. The same argument can be used for any matrix $[F_{ij}]$, and the proof is complete.
\end{proof}

Our goal is to show that $\fD$ is a full corner of $\B_\infty \rtimes_{\beta_\infty} \bbZ$. Recall that for a projection $p$ in the multiplier algebra of a $\ca$-algebra $\fC$, the $\ca$-subalgebra $p\fC p$ is called a \emph{corner} of $\fC$. A corner is called \emph{full} if the linear span of $\fC p \fC$ is dense in $\fC$. Equivalently, if $p\fC p$ is not contained in any proper ideal of $\fC$.

Let $p=\widehat{\pi}(e,\0)$. Then it is easy to see that $p$ is a projection in (the multiplier algebra of) $\B_\infty \rtimes_{\beta_\infty} \bbZ$, and that
\begin{enumerate}
\item $p\widehat{\pi}(c,\0)=\widehat{\pi}(c,\0)=\widehat{\pi}(c,\0)p$,
for every $c\in \C$,
\item $p U^n\widehat{\pi}(b)= U^n \widehat{\pi}(c,x_1,\dots,x_n,\0)$,
for every $b=(c,\tx)\in \B$,
\item $U^n\widehat{\pi}(b) p = U^n\widehat{\pi}(c,\0)$,
for every $b=(c,\tx)\in \B$,
\item $p A p= A$, for every $A\in \fD$; hence $p\fD p=\fD$.
\end{enumerate}

\begin{proposition}\textup{\cite[Theorem 3.10]{KakKat11}}
The corner $p(\B_\infty \rtimes_{\beta_\infty} \bbZ)p$ is full and equals to $\fD$.
\end{proposition}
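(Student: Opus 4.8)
The plan is to prove the two assertions separately, in both cases reducing to the spanning monomials $U^n\widehat{\pi}(b)(U^*)^m$ ($n,m\in\bbZ_+$, $b\in\B$) of $\fC:=\B_\infty\rtimes_{\beta_\infty}\bbZ$ and exploiting the covariance computation already packaged in Lemma \ref{lemma head}. Throughout I write $b=(c,x_1,x_2,\dots)$ and $b_l=(c,x_1,\dots,x_l,\0)$ for the truncation keeping the first $l$ tail coordinates.

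For the equality $p\fC p=\fA$, the inclusion $\fA\subseteq p\fC p$ is immediate from property (4), which gives $p\fA p=\fA$. For the reverse inclusion, since $X\mapsto pXp$ is norm continuous and $\fC$ is the closed span of the monomials, it suffices to check $pU^n\widehat{\pi}(b)(U^*)^m p\in\fA$. Setting $l=\min\{n,m\}$, I would first use property (2) and its adjoint to show that compression truncates the tail, namely
\[
pU^n\widehat{\pi}(b)(U^*)^m p=U^n\widehat{\pi}(b_l)(U^*)^m .
\]
In the balanced case $n=m=l$, Lemma \ref{lemma head} writes $U^l\widehat{\pi}(b)(U^*)^l=A+\widehat{\pi}(0,\ty)$ with $A\in\fA$; since $\widehat{\pi}(0,\ty)p=0$ and $p\widehat{\pi}(0,\ty)=0$ by properties (3) and (2), compression annihilates the residual and leaves $A\in\fA$. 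Applying this to $b_l$ itself (whose tail already sits in the first $l$ coordinates, so compression leaves it unchanged) gives $U^l\widehat{\pi}(b_l)(U^*)^l\in\fA$. The general case then follows by factoring: if $n\ge m$ pull out $U^{n-m}$ on the left, if $n\le m$ pull out $(U^*)^{m-n}$ on the right, and the monomial description $\fA=\overline{\Span}\{U^i\widehat{\pi}(c,\0)(U^*)^j\}$ shows $U^{n-m}\fA\subseteq\fA$ and $\fA(U^*)^{m-n}\subseteq\fA$. This gives $p\fC p\subseteq\fA$.

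For fullness, I would show that the closed ideal $I$ generated by $p$ contains every $\widehat{\pi}(b)$, $b\in\B$; since $U$ and $U^*$ are multipliers of $\fC$ and $I$ is a closed ideal, this forces $U^n\widehat{\pi}(b)(U^*)^m\in I$ and hence $I=\fC$. The piece $\widehat{\pi}(c,\0)$ is easy, since property (1) gives $\widehat{\pi}(c,\0)=\widehat{\pi}(c,\0)\,p\,p\in\fC p\fC$. The crux is the pure-tail piece $\widehat{\pi}(0,\tx)$, where $\tx$ may be taken finitely supported and, by linearity, concentrated in a single coordinate $j$. I would argue by induction on $j$ using the identity from the proof of Lemma \ref{lemma head}: for $\theta(c')=x_1$,
\[
U\widehat{\pi}(0,\tx)U^*=\widehat{\pi}(c',\0)-U\widehat{\pi}(\ga(c'),\0)U^*+\widehat{\pi}(0,x_2,x_3,\dots).
\]
At $j=1$ the shifted tail vanishes, and the two remaining terms lie in $I$ (each is a power of $U$ times some $\widehat{\pi}(\cdot,\0)\in I$ times a power of $U^*$), whence $\widehat{\pi}(0,\tx)=U^*[\,\cdots\,]U\in I$. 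For $j\ge2$ one has $x_1=0$, so $c'=0$ and the identity collapses to $U\widehat{\pi}(0,\tx)U^*=\widehat{\pi}(0,\tx')$ with the nonzero entry shifted from position $j$ to position $j-1$; the inductive hypothesis and the ideal property then give $\widehat{\pi}(0,\tx)\in I$.

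The main obstacle I anticipate is this tail reconstruction in the fullness step: the elements $\widehat{\pi}(0,\tx)$ satisfy $p\widehat{\pi}(0,\tx)=\widehat{\pi}(0,\tx)p=0$, so they are orthogonal to $p$ from both sides and cannot be produced by naive compression; recovering them inside the ideal of $p$ is exactly where the covariance relation (encoded in Lemma \ref{lemma head} and equation (\ref{eq: u})) must be used to trade a tail coordinate for genuine terms of $\fA$. Everything else is bookkeeping with properties (1)--(4), continuity of compression, and the monomial description of $\fA$.
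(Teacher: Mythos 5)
Your proposal is correct and follows essentially the same route as the paper: the corner equality is obtained by compressing the spanning monomials $U^n\widehat{\pi}(b)(U^*)^m$ and invoking Lemma \ref{lemma head} to kill the residual tail term, and fullness is obtained by recovering the tail elements inside the ideal of $p$ via the identity $\widehat{\pi}(0,1_\M,\0)=U^*\widehat{\pi}(e,\0)U-\widehat{\pi}(e,\0)$ (your $j=1$ case with $c'=e$) together with conjugation by powers of $U$ (your induction on the coordinate $j$). The only differences are bookkeeping: you truncate by compressing first and then apply the lemma, and you treat general tail entries $\theta(c')$ directly where the paper reduces to the single generator $1_\M$; both are sound.
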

\begin{proof}
Let $U^n\widehat{\pi}(b)(U^*)^m$ be a monomial in the crossed product. If $n\geq m$, then by Lemma \ref{lemma head}
\begin{align*}
p\big(U^n\widehat{\pi}(b)(U^*)^m\big)p
& =
p\big(U^{n-m} U^m \widehat{\pi}(b)(U^*)^m\big)p \\
& =
p\bigg(U^{n-m}\big(A+\widehat{\pi}(0,\ty)\big)\bigg)p=pU^{n-m}Ap=U^{n-m}A,
\end{align*}
for some $A\in \fD$. In the same way we get that $p(U^n\widehat{\pi}(b)(U^*)^m)p=A(U^*)^{m-n}$, for some $A\in \fD$, when $n<m$. Hence, $p(\B_\infty \rtimes_{\beta_\infty} \bbZ)p \subseteq \fD$. On the other hand, $\fD \subseteq \B_\infty \rtimes_{\beta_\infty} \bbZ$, hence $\fD=p\fD p \subseteq p(\B_\infty \rtimes_{\beta_\infty} \bbZ) p$. Thus $\fD$ is the corner $p(\B_\infty \rtimes_{\beta_\infty} \bbZ) p$.

To prove that it is also full, let $\I$ be an ideal in the crossed product, such that $\fD \subseteq \I$. We will prove that $\I$ is not non-trivial. To this end, it suffices to prove that $\widehat{\pi}(\B_\infty)\subseteq \I$. Since $\B_\infty =\overline{\cup\beta_\infty^{-n}(\B)}$ and due to the left covariance relation, it suffices to show that $\widehat{\pi}(\B)\subseteq \I$. First, $\widehat{\pi}(\C)\in \fD \subseteq \I$. In order to prove that $\widehat{\pi}(c_0(\theta(\C))) \subseteq \I$, it suffices to show that $\widehat{\pi}(0,\underbrace{0,\dots,0}_{\text{n-times}},1_\M,0,\dots)\in \I$, for every $n\in\bbZ_+$. Note that
\begin{align*}
\widehat{\pi}(0,\underbrace{0,\dots,0}_{\text{n-times}},1_\M,0,\dots)
& =
\widehat{\pi}(\beta^n(0,1_\M,\0))\\
& =
\widehat{\pi}\Big((\beta_\infty)^n(0,1_\M,\0)\Big)
  =
(U^*)^n \widehat{\pi}(0,1_\M,\0)U^n.
\end{align*}
Hence, it suffices to show that $\widehat{\pi}(0,1_\M,\0)\in \I$.
Indeed,
\begin{align*}
\widehat{\pi}(0,1_\M,\0)
& =
\widehat{\pi}(\beta(e,\0)-(e,\0))
  =
\widehat{\pi}(\beta(e,\0))- \widehat{\pi}(e,\0)\\
& =
\widehat{\pi}(\beta_\infty(e,\0))-\widehat{\pi}(e,\0)
  =
U^*\widehat{\pi}(e,\0)U-\widehat{\pi}(e,\0)\in \I,
\end{align*}
since $\widehat{\pi}(e,\0)\in \I$.
\end{proof}

\begin{theorem}\label{c*-envelope left contractive}
\textup{\cite[Theorem 4.6]{KakKat11}} The $\ca$-envelope of the semicrossed products $\fA(\C,\ga,\text{contr})_l$ and $\fA(\C,\ga,\text{is})_l$ is the full corner $p\big(\B_\infty \rtimes_{\beta_\infty} \bbZ\big)p$ of the crossed product $\B_\infty \rtimes_{\beta_\infty} \bbZ$, where $p=\widehat{\pi}(e,\0)$.
\end{theorem}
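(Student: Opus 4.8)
The plan is to combine the two preceding propositions with a \v{S}ilov-ideal argument that mirrors the proof of proposition \ref{C*-envelope right isometric}. First, by proposition \ref{left isomorphic 2} the algebras $\fA(\C,\ga,\text{contr})_l$ and $\fA(\C,\ga,\text{is})_l$ are completely isometrically isomorphic, so it suffices to identify the $\ca$-envelope of $\A:=\fA(\C,\ga,\text{is})_l$. By the proposition proved above, $\A$ sits completely isometrically inside $\fA=p(\B_\infty \rtimes_{\beta_\infty}\bbZ)p$ as the closure of the polynomials $\sum_{n=0}^k U^n\widehat{\pi}(c_n,\0)$, and since $\fA$ is generated by $U\widehat{\pi}(e,\0)$ and $\widehat{\pi}(c,\0)$ ($c\in\C$), which lie in $\A$, it is a $\ca$-cover of $\A$. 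Hence everything reduces to showing that the \v{S}ilov ideal $\J$ of $\A$ in $\fA$ is trivial, for then $\ca_{\text{env}}(\A)=\fA/\J=\fA$.

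Next I would exploit the gauge action. The dual (gauge) action $\{\beta_z\}_{z\in\bbT}$ of the crossed product fixes the projection $p=\widehat{\pi}(e,\0)$, hence restricts to an action of $\bbT$ on the corner $\fA$ whose conditional expectation is exactly the map $E'$ with range $\range E'$ discussed before the statement. This action leaves $\A$ invariant, as it multiplies each monomial $U^n\widehat{\pi}(c_n,\0)$ by $z^n$; consequently, by the uniqueness of the largest boundary ideal (exactly as in proposition \ref{C*-envelope right isometric}), the \v{S}ilov ideal $\J$ is gauge-invariant, i.e. $\beta_z(\J)=\J$ for all $z\in\bbT$.

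Finally I would derive a contradiction from $\J\neq(0)$. Gauge-invariance gives $E'(\J)\subseteq\J$, since $E'(x)=\int_\bbT\beta_z(x)\,dz$ is a norm-limit of convex combinations of the elements $\beta_z(x)\in\J$ and $\J$ is closed. Moreover $E'$ is faithful, being the restriction of the faithful conditional expectation $E$ to the corner $\fA=p(\B_\infty \rtimes_{\beta_\infty}\bbZ)p$. Thus for any $0\neq a\in\J$ the element $E'(a^*a)$ is a nonzero member of $\J\cap\range E'$, which is an ideal of $\range E'$. By the computation carried out just before the statement, every nontrivial ideal of $\range E'$ meets $\{\widehat{\pi}(c,\0):c\in\C\}$, so there is some $c\neq0$ with $\widehat{\pi}(c,\0)\in\J$. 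But $\widehat{\pi}(c,\0)$ is the $n=0$ term of $\A$, and the map $c\mapsto\widehat{\pi}(c,\0)$ is isometric on $\C$ (the copy of $\C$ sits isometrically in $\A$); as $\J$ is a boundary ideal, the quotient $q\colon\fA\to\fA/\J$ is completely isometric on $\A$, whence $\nor{c}_\C=\nor{q(\widehat{\pi}(c,\0))}=0$, contradicting $c\neq0$. Therefore $\J=(0)$ and $\fA=p(\B_\infty \rtimes_{\beta_\infty}\bbZ)p$ is the $\ca$-envelope.

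The main obstacle is the step asserting gauge-invariance of the \v{S}ilov ideal: one must verify that the dual action genuinely restricts to an action on the corner $\fA$ (which is where fixing $p=\widehat{\pi}(e,\0)$ is used) and that it preserves $\A$ setwise, so that uniqueness of the largest boundary ideal forces its invariance. This is precisely the point at which the construction of the ``tail'' algebra $\B=\C\oplus T$, together with identity (\ref{eq: u}), pays off, since it is what makes $\fA$ a well-behaved full corner on which the averaging argument can be run.
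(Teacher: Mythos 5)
Your proposal is correct and follows essentially the same route as the paper: take $\fA=p(\B_\infty\rtimes_{\beta_\infty}\bbZ)p$ as the $\ca$-cover established in the preceding propositions, use gauge-invariance of the \v{S}ilov ideal together with the faithful conditional expectation $E'$ to produce a nonzero $c\in\C$ with $\widehat{\pi}(c,\0)$ in the \v{S}ilov ideal, and contradict the isometric embedding of $\C$ via the computation on ideals of $\range E'$ given just before the theorem. The only difference is that you spell out details the paper leaves implicit (why the gauge action restricts to the corner and why the \v{S}ilov ideal is $\beta_z$-invariant), which is fine.
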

\begin{proof}
We have shown that $\fD=p\big(\B_\infty \rtimes_{\beta_\infty} \bbZ\big)p$ is a $\ca$-cover of the semicrossed products. Let $\J$ be the \v{S}ilov ideal. First of all, note that $\J$ cannot intersect $\C$. Indeed if there was a $\widehat\pi(c,\0) \in \J \cap \C$ then 
\begin{align*}
\|c\| = \|\widehat{\pi}(c,\0)\|=\|\widehat{\pi}(c,\0) + \sca{\widehat{\pi}(c,\0)}\|=0.
\end{align*}
Since, $\J$ is $\beta_z$-invariant, there is an $x$ in the fixed point algebra and in $\J$. Suppose that $x$ has the form
\begin{align*}
x = \sum_{k=0}^n (Up)^k \widehat\pi(c_k,\0) (pU^*)^k,
\end{align*}
and without loss of generality we may assume that $k$ is the least such integer. Indeed, otherwise substitute $x$ with $x - UpU^*x$. Then for $a \in \ker\ga^\perp$ we have that
\begin{align*}
\J \ni xa = \sum_{k=0}^n (Up)^k \widehat\pi(c_k,\0) \widehat\pi(\ga_k(a),\0) (pU^*)^k = \widehat\pi(c_0a, \0).
\end{align*}
Therefore $\widehat\pi(c_0a) = 0$ which implies that $c_0 \in \ker\ga^\perp$. Now recall that $U$ is a unitary, hence
\begin{align*}
\widehat{\pi}(c_0, \0) 
& = \widehat\pi(c_0,\0)UU^* 
 = U \widehat\pi\left(\beta(c_0,\0)\right) U^* \\
& = U \widehat\pi(\ga(c_0),\theta(c_0),\0) U^* 
 = (Up) \widehat\pi(\ga(c_0),\0) (pU)^*.
\end{align*}
Consequently,
\begin{align*}
UpU^* x = UpU^* \left( \sum_{k=1}^n (Up)^k \widehat\pi(c_{k}',\0) (pU^*)^k \right)
= \sum_{k=1}^n (Up)^k \widehat\pi(c_{k}',\0) (pU^*)^k = x,
\end{align*}
and also $UpU^* x UpU^* = UpU^* x = x$. Thus
\begin{align*}
pU^* x Up = \sum_{k=0}^{n-1} (Up)^k \widehat\pi(c_{k+1}',\0) (pU^*)^k \in \J.
\end{align*}
By assumption $pU^* x Up = 0$, hence $x= UpU^*x = Up(pU^* x Up)pU^* = 0$.
\end{proof}

\section{An overview}\label{overview}

In this section we gather some remarks concerning the semicrossed products we have defined. We present them just for the semicrossed products that satisfy the left covariance relation. Of course one can get the analogues for the right case. However, one must distinguish the left from the right case, as we have already argued in Remark \ref{r:pet}.

Propositions \ref{left isomorphic 1} and \ref{left isomorphic 2} imply that $\fA(\C,\ga,\text{contr})_l \simeq \fA(\C,\ga,\text{is})_l$ and $\fA(\C,\ga,\text{co-is})_l\simeq \fA(\C,\ga,\text{un})_l$, and by the universal property of $\fA(\C,\ga,\text{contr})_l$, the identity map $\ell^1(\bbZ_+,\C,\ga)_l \rightarrow \ell^1(\bbZ_+,\C,\ga)_l$ extends to a unital completely contractive homomorphism of $\fA(\C,\ga,\text{contr})_l$ onto $\fA(\C,\ga,\text{co-is})_l$ (since every covariant co-isometric pair is a covariant contractive pair).

But there are cases where the semicrossed products are not completely isometrically isomorphic. Indeed, consider the dynamical system $(\C,\ga)$ of Example \ref{trivial}. Then the semicrossed products $\fA(\C,\ga,\text{co-is})_l$ and $\fA(\C,\ga,\text{un})_l$ are completely isometrically isomorphic to the disc algebra $\bbA(\bbD)$. On the other hand, by Theorem \ref{left only one}, the semicrossed products $\fA(\C,\ga,\text{contr})_l$ and $\fA(\C,\ga,\text{is})_l$ contain a copy of $\C$. Since $\bbA(\bbD)$ does not contain a copy of $\oC(\bbR_+ \cup \{\infty\})$ (the only $\ca$-algebra that lives in $\bbA(\bbD)$ is $\bbC$), we have that $\fA(\C,\ga,\text{is})_l$ and $\fA(\C,\ga,\text{co-is})_l$ cannot be completely isometrically isomorphic. The following proposition shows that this happens because $\ga$ is not injective.

\begin{proposition}
There is a unital completely isometric isomorphism $\Phi: \fA(\C,\ga,\text{contr})_l \rightarrow
\fA(\C,\ga,\text{co-is})_l$, such that $\Phi$ fixes $\C$ pointwise
if, and only if, $\ga$ is injective. The same holds for the right
case.
\end{proposition}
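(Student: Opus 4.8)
The plan is to treat the two implications separately. For the \emph{if} direction I would take for $\Phi$ the canonical map furnished by the universal property: since every left covariant co-isometric pair is a left covariant contractive pair, the identity on $\ell^1(\bbZ_+,\C,\ga)_l$ extends to a u.c.c.\ homomorphism $\fA(\C,\ga,\text{contr})_l\to\fA(\C,\ga,\text{co-is})_l$ that fixes each $\gd_0\otimes c$, hence fixes $\C$ pointwise, and which (by the overview) is onto; the task is to show it is completely isometric. For the \emph{only if} direction I would start from an arbitrary u.c.is.is.\ $\Phi$ fixing $\C$ and deduce injectivity of $\ga$. The converse is the soft half; the real work is in the if direction.

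For the converse, suppose a u.c.is.is.\ $\Phi$ fixing $\C$ pointwise exists. By theorem \ref{left only one} the algebra $\fA(\C,\ga,\text{contr})_l$ contains a \emph{faithful} copy of $\C$, so $\gd_0\otimes c\neq 0$ whenever $c\neq 0$. On the other hand, for every left covariant co-isometric pair $(\pi,V)$ and every $c\in\ker\ga^n$ one has $\pi(c)V^n=V^n\pi(\ga^n(c))=0$, and $V^n(V^*)^n=I$ forces $\pi(c)=0$; thus $\pi(\R_\ga)=0$ for all such pairs (the left analogue of remark \ref{remark radical}), and consequently $\gd_0\otimes c=0$ in $\fA(\C,\ga,\text{co-is})_l$ for every $c\in\R_\ga$. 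If $\ga$ were not injective, pick $0\neq c\in\R_\ga$: then $\gd_0\otimes c=\Phi(\gd_0\otimes c)=0$ in the target while $\gd_0\otimes c\neq 0$ in the source, contradicting injectivity of $\Phi$. Hence $\R_\ga=(0)$, i.e. $\ga$ is injective.

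For the direct implication assume $\ga$ injective, so $\R_\ga=(0)$, $\C$ embeds in $\C_\infty$, and $\ga_\infty|_\C=\ga$. The key observation is that the pure isometric model of theorem \ref{left only one} is the restriction, to an \emph{invariant} subspace, of the left regular representation of $\C_\infty\rtimes_{\ga_\infty}\bbZ$. Concretely, choose $(H_0,\pi)$ faithful on $\C_\infty$ and form $(\widehat{\pi},U)$ on $H_0\otimes\ell^2(\bbZ)$ as in the introduction. Since $U=1_{H_0}\otimes u$ sends $e_m\mapsto e_{m+1}$ and each $\widehat{\pi}(c)$ is diagonal, the subspace $H_0\otimes\ell^2(\bbZ_+)$ is invariant under every $U^n\widehat{\pi}(c)$ with $n\geq 0$; and because $\ga_\infty^n(c)=\ga^n(c)$ for $c\in\C$ and $n\geq 0$, the restriction of $(\widehat{\pi},U)$ to $H_0\otimes\ell^2(\bbZ_+)$ is exactly the pair $(\widetilde{\pi},S)$ of example \ref{example left pure isometric}. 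Writing $\nor{\cdot}_{\text{contr}}$, $\nor{\cdot}_{\text{co-is}}$, $\nor{\cdot}_{\text{un}}$ for the matricial seminorms obtained as suprema over the left covariant contractive, co-isometric and unitary pairs, restriction to an invariant subspace is norm nonincreasing, so at every matrix level and for every $[F_{ij}]\in\M_\nu(\ell^1(\bbZ_+,\C,\ga)_l)$
\begin{align*}
\nor{[F_{ij}]}_{\text{contr}}
=\nor{[(S\times\widetilde{\pi})(F_{ij})]}
\leq \nor{[(U\times\widehat{\pi})(F_{ij})]}
\leq \nor{[F_{ij}]}_{\text{un}},
\end{align*}
the first equality being theorem \ref{left only one} together with proposition \ref{left isomorphic 2}. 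Since left covariant unitary pairs are co-isometric and left covariant co-isometric pairs are contractive, $\nor{[F_{ij}]}_{\text{un}}\leq\nor{[F_{ij}]}_{\text{co-is}}\leq\nor{[F_{ij}]}_{\text{contr}}$. Chaining the two lines forces all three seminorms to coincide, so the canonical $\Phi$ is completely isometric; being onto and fixing $\C$, it is the required u.c.is.is.

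The main obstacle is exactly the identification in the last paragraph: one must verify that the completely isometric pure-isometric representation $(\widetilde{\pi},S)$ embeds, with matching diagonal entries, as a restriction to an \emph{invariant} (not merely semi-invariant) subspace of the unitary crossed-product model, for only then does the trivial estimate $\nor{T|_M}\leq\nor{T}$ deliver the nontrivial inequality $\nor{\cdot}_{\text{contr}}\leq\nor{\cdot}_{\text{un}}$ that collapses the four norms. Finally, the right-case assertion follows by transporting this whole argument through the antilinear duality of lemma \ref{duality}, which interchanges the left contractive and co-isometric classes with the corresponding right classes and preserves all matrix norms.
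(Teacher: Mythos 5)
Your proof is correct. The ``only if'' half is essentially the paper's own argument: both of you use that every left covariant co-isometric pair annihilates $\R_\ga$ (the left analogue of remark \ref{remark radical}), so $\nor{\gd_0\otimes c}_{\fA(\C,\ga,\text{co-is})_l}=0$ for $c\in\R_\ga$, while theorem \ref{left only one} keeps $\C$ isometrically inside $\fA(\C,\ga,\text{contr})_l$; a map fixing $\C$ then forces $\R_\ga=(0)$. For the ``if'' half you take a genuinely different route. The paper simply cites theorems \ref{C*-envelope left co-isometric} and \ref{left only one}: with $\ga$ injective the tail vanishes, $\B=\C$, and both semicrossed products are completely isometrically carried onto the same subalgebra of $\C_\infty\rtimes_{\ga_\infty}\bbZ$; the co-isometric side of that identification ultimately rests on dilating right covariant isometric pairs to unitary pairs (Stacey, imported through lemma \ref{duality}). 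You bypass the dilation and the corner machinery entirely: the observation that the pure isometric model $(\widetilde{\pi},S)$ of example \ref{example left pure isometric} is the restriction of the left regular pair $(\widehat{\pi},U)$ of $\C_\infty\rtimes_{\ga_\infty}\bbZ$ to the invariant subspace $H_0\otimes\ell^2(\bbZ_+)$ gives $\nor{\cdot}_{\text{contr}}\leq\nor{\cdot}_{\text{un}}$ at every matrix level, and the trivial containments of the representation classes then collapse all four seminorms. This is more elementary and self-contained on the co-isometric side; the price is that the heavy lifting is concentrated entirely in the gauge-action argument of theorem \ref{left only one}, which you still need for the equality $\nor{\cdot}_{\text{contr}}=\nor{(S\times\widetilde{\pi})(\cdot)}$, whereas the paper's citation of theorem \ref{C*-envelope left co-isometric} also hands it the identification of the common $\ca$-envelope for free. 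Your appeal to lemma \ref{duality} for the right-handed statement matches the paper's (unwritten) intent.
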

\begin{proof}
Let $\Phi\colon \fA(\C,\ga,\text{contr})_l \rightarrow \fA(\C,\ga,\text{co-is})_l$ be such that $\Phi(\gd_0 \otimes c)=\gd_0 \otimes c$. Then for $c\in \R_\ga$,
\begin{align*}
0
& =
\nor{\gd_0 \otimes c}_{\fA(\C,\ga,\text{co-is})_l}
  =
\nor{\Phi(\gd_0 \otimes c)}_{\fA(\C,\ga,\text{co-is})_l}\\
& =
\nor{\gd_0 \otimes c}_{\fA(\C,\ga,\text{contr})_l}
  =
\nor{c}_\C.
\end{align*}
Hence, $\R_\ga=(0)$, so $\ga$ is injective.

For the converse, assume that $\ga\colon \C \rightarrow \C$ is injective. Then the ideal $\R_\ga$ is trivial and the $\ca$-algebra $\B$ of Section \ref{left} is exactly $\C$. Hence, Theorems \ref{C*-envelope left co-isometric} and \ref{left only one} give that the map $\gd_n \otimes c \mapsto \gd_n \otimes c \in \C_\infty \rtimes_{\ga_\infty} \bbZ$ extends to a completely isometric homomorphism from $\fA(\C,\ga,\text{contr})_l$ onto $\fA(\C,\ga,\text{co-is})_l$.
\end{proof}

\begin{remark}\label{remark all the same}
\textup{By the previous proposition the mapping $\gd_n \otimes c \mapsto \gd_n \otimes c$ defines a completely isometric isomorphism between all pairs of semicrossed products if, and only, if $\ga$ is injective. In this case, they all share the same $\ca$-envelope, the crossed product $\C_\infty \rtimes_{\ga_\infty} \bbZ$.}

\textup{Finally, we note that when $\ga\colon \C \rightarrow \C$ is a $*$-automorphism, then $\C_\infty=\C$ and $\ga_\infty=\ga$, thus the $\ca$-envelope of the semicrossed products is the crossed product $\C \rtimes_\ga \bbZ$, and we recover \cite[Theorem 1.5]{Kak09}.}
\end{remark}

\begin{remark}\label{Peters}
\textup{Theorem \ref{theorem C^*-envelope right isometric} and Theorem \ref{c*-envelope left contractive} extend \cite[Theorem 4]{Petarx}. In \cite{Petarx} Peters studies the semicrossed products $\fA(\C,\ga,\text{is})_l$ and $\fA(\C,\ga,\text{is})_r$, where $\C$ is a commutative $\ca$-algebra $\oC(X)$ and $\ga(f)=f\circ \phi$, with $\phi\colon X \rightarrow X$ surjective, i.e. $\ga$ is injective. In this case, one can construct an extension $(\tX,\tphi)$ of $(X,\phi)$, where $\tphi$ is an homeomorphism. In a few words, let $\prod_{n\in \bbZ_+} X$ with its usual topology and define the subset
\begin{align*}
\tX = \{(x_n)\in \prod_{n\in \bbZ_+} X: \phi(x_{n+1})=x_n\},
\end{align*}
which is a compact Hausdorff space. Then the continuous map
\begin{align*}
\tphi\colon \tX \rightarrow \tX: (x_n) \mapsto \tphi((x_n))=(\phi(x_1),(x_n))
\end{align*}
is a homeomorphism of $\tX$ and by \cite[Theorem 4]{Petarx} the $\ca$-envelope of $\fA(\C,\ga,\text{is})_l$ and $\fA(\C,\ga,\text{is})_r$ is $\oC(\tX)\rtimes_{\tphi} \bbZ$. But by \cite[Corollary 4]{Petarx} $(\tX,\tphi)$ is exactly $(\oC(X)_\infty, \ga_\infty)$.}
\end{remark}

\section{Minimality}\label{minimality}

In this section, we indicate why we believe that the $\ca$-envelope of a semicrossed product is a good candidate for the $\ca$-algebra generated by a unital dynamical system. The reason is that properties of $(\C,\ga)$ pass naturally to the dynamical systems $(\B,\beta)$ and $(\B_\infty,\beta_\infty)$ and vice versa. To give an example, we prove a result that connects a notion of \emph{minimality} to a notion of \emph{simplicity}. There are a number of similar results in the literature. For example, Davidson and Roydor have proved that, for commutative multivariable dynamical systems, minimality is equivalent to the simplicity of the $\ca$-envelope of the tensor product \cite[Definition 5.1, Proposition 5.4]{DavR}. Also, there are well known criteria that give equivalence of simplicity of  a crossed product to properties of a dynamical system by a $*$-automorphism (for example see \cite{ArchSpiel93, KawTom90, Rea91} and/or \cite{Sie09}). However, none of these results fits in our case. For example \cite[Proposition 5.4]{DavR} is proved for dynamical systems where at least two $*$-endomorphisms participate. Moreover, in our context we deal in general with non-automorphic dynamical systems. Nevertheless, in most of the cases the $\ca$-envelope of a semicrossed product is a crossed product of a larger dynamical system and our intention is to show how results for crossed products can be used for semicrossed products with a small effort. Let us fix the notion of minimality that will be used throughout this section.

\begin{definition}\label{defn minimal}
\textup{A dynamical system $(\C,\ga)$ is called \emph{minimal} if there are no non-trivial ideals $\J$ of $\C$ that are $\ga$-invariant. Moreover, when $\ga$ is a $*$-automorphism, it is called \emph{bi-minimal} if there are no non-trivial \emph{$\ga$-bi-invariant} ideals $\J$ of $\C$, i.e. that satisfy $\ga(\J)=\J$.}
\end{definition}

\begin{remark}\label{unital for minimality}
\textup{\emph{When $\ga$ is a $*$-automorphism, then $(\C,\ga)$ is minimal if and only if it is bi-minimal}. Indeed, let $\J$ such that $\ga(\J)\subseteq \J$, and define $\I= \overline{\cup_n \ga^{-n}(\J)}$. Then $\I$ is a $\ga$-bi-invariant ideal, hence if $(\C,\ga)$ is bi-minimal, then $e_\C \in \I$. Thus, for $\eps>0$, there is $n_0$ and $x\in \J$ such that $\nor{e_\C - \ga^{-n_0}(x)} <\eps$. Since $\ga$ is isometric and unital we get
\begin{align*}
 \nor{e_\C - x}=\nor{\ga^{-n_0}(e_\C-x)}= \nor{e_\C - \ga^{-n_0}(x)} <\eps.
\end{align*}
Since $\eps>0$ was arbitrary and $x\in \J$ we get that $e_\C \in \J$, hence $\J=\C$. Thus $(\C,\ga)$ is minimal. The converse is trivial.}
\end{remark}

Before we proceed to the first main Theorem of this section, let us briefly discuss the Fourier transform of a crossed product. Let $(\C,\ga)$ be a dynamical system such that $\ga\colon \C \rightarrow \C$ is a $*$-automorphism. If $E\colon \C\rtimes_\ga \bbZ \rightarrow \C\rtimes_\ga \bbZ^\beta\equiv \C$ is the conditional expectation of the crossed product, we define the Fourier co-efficients
\begin{align*}
E_n\colon \C\rtimes_\ga \bbZ \rightarrow \C\rtimes_\ga \bbZ^\beta \equiv \C \colon F \mapsto E_n(F):=E(U^{-n}F), n\in \bbZ.
\end{align*}
A F\'{e}jer-type Lemma shows that the C\'{e}saro means of the Fourier monomials $U^n E_n(F)$ converge to $F$ in norm. Moreover, if $\I$ is a non-zero ideal of $\C\rtimes_\ga \bbZ$, then $E_n(\I)$ is a non-zero $\ga$-bi-invariant ideal in $\C$.

\begin{definition}\label{Fourier defn}
\textup{Let $(\C,\ga)$ be a dynamical system such that $\ga\colon \C \rightarrow \C$ is a $*$-automorphism. An ideal $\I$ of $\C\rtimes_\ga \bbZ$ is called \emph{Fourier-invariant} if $E_n(\I)\subseteq \I$ for all $n\in \bbZ$.}
\end{definition}

Note that this is equivalent to saying that the Fourier monomials $U^nE_n(F)$ are in $\I$, for every $F\in \I$ and $n\in \bbZ$.

For the next Theorem we use the constructions of Section \ref{left}.

\begin{theorem}\label{minimal theorem}
The following are equivalent:
\begin{enumerate}
 \item $(\C,\ga)$ is minimal,
 \item $(\B,\beta)$ is minimal,
 \item $(\B_\infty,\beta_{\infty})$ is bi-minimal,
 \item the crossed product $\B_\infty\rtimes_{\beta_\infty} \bbZ$
 has no non-trivial Fourier-invariant ideals.
\end{enumerate}
If any of the previous conditions holds, then $\ga$ is injective and
$\C=\B$. Moreover, the semicrossed products we have defined with
respect to the collections $\F_{t,l}$, for $t=1,2,3,4$, are
completely isometrically isomorphic to one another and share the
same $\ca$-envelope $\C_\infty\rtimes_{\ga_\infty} \bbZ$.
\end{theorem}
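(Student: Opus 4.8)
The theorem bundles together several equivalences already essentially proven in the preceding proposition (the chain $(1)\Leftrightarrow(2)\Leftrightarrow(3)$, together with the fact that minimality forces $\ga$ injective and $\C=\B$), so the genuinely new content is the equivalence with condition $(4)$ and the final remark about the semicrossed products. I would therefore structure the proof in two parts: first dispatch $(1)$--$(3)$ by citing the previous proposition, and then prove $(3)\Leftrightarrow(4)$ using the Fourier-coefficient machinery developed in the discussion just before Definition \ref{Fourier defn}. Finally I would harvest the statement about the four semicrossed products as a corollary of the injectivity of $\ga$ together with Remark \ref{remark all the same}.

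\medskip

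For the implication $(3)\Rightarrow(4)$, suppose $(\B_\infty,\beta_\infty)$ is bi-minimal and let $\I\neq(0)$ be a Fourier-invariant ideal of $\B_\infty\rtimes_{\beta_\infty}\bbZ$. The key observation, already recorded in the run-up to Definition \ref{Fourier defn}, is that for any nonzero ideal $\I$ each $E_n(\I)$ is a \emph{nonzero} $\beta_\infty$-bi-invariant ideal of $\B_\infty$ (nonzero because $E$ is faithful). Fourier-invariance means precisely that $E_n(\I)\subseteq\I$; since $(\B_\infty,\beta_\infty)$ is bi-minimal and $E_n(\I)$ is bi-invariant, we must have $E_n(\I)=\B_\infty$ for each $n$. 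In particular the unit $e_{\B_\infty}=\widehat\pi(e,\0)$ lies in $\I$ (taking $n=0$), whence $\I=\B_\infty\rtimes_{\beta_\infty}\bbZ$. Thus no \emph{nontrivial} Fourier-invariant ideal exists.

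\medskip

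For the converse $(4)\Rightarrow(3)$ I would argue contrapositively: if $(\B_\infty,\beta_\infty)$ is \emph{not} bi-minimal, there is a nonzero proper $\beta_\infty$-bi-invariant ideal $\J$ of $\B_\infty$. The plan is to inflate $\J$ to a Fourier-invariant ideal of the crossed product by setting $\I=\overline{\Span}\{U^n\widehat\pi(y):y\in\J,\ n\in\bbZ\}$ and checking that this is a (genuinely proper, since it sits inside the kernel of the quotient induced by $\B_\infty\to\B_\infty/\J$) ideal whose Fourier coefficients $E_n(\I)$ land back in $\J\subseteq\I$; bi-invariance of $\J$ is exactly what makes $\I$ an ideal and makes $U^n\widehat\pi(y)(U^*)^m$ stay inside $\I$ after left/right multiplication by generators. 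This is the main technical obstacle, because one must verify both that $\I$ is a two-sided ideal and that it is proper, and the bi-invariance (rather than mere invariance) of $\J$ is what is needed on both counts. I expect this to run smoothly by the same computations displayed before Definition \ref{Fourier defn}, read in reverse.

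\medskip

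For the last sentence, once any of $(1)$--$(4)$ holds we have $\ga$ injective and $\C=\B$ by the preceding proposition. Injectivity of $\ga$ triggers Remark \ref{remark all the same}: the map $\gd_n\otimes c\mapsto\gd_n\otimes c$ is a completely isometric isomorphism between all four semicrossed products, and by Theorems \ref{C*-envelope left co-isometric} and \ref{left only one} (equivalently Theorem \ref{c*-envelope left contractive} with $\B=\C$) they share the common $\ca$-envelope, which reduces to the crossed product $\C_\infty\rtimes_{\ga_\infty}\bbZ$ since the tail $T$ vanishes and $\B_\infty=\C_\infty$. I would close by noting that the full corner $p(\B_\infty\rtimes_{\beta_\infty}\bbZ)p$ degenerates to the whole crossed product $\C_\infty\rtimes_{\ga_\infty}\bbZ$ precisely because $p=\widehat\pi(e,\0)$ becomes the unit when $T=0$.
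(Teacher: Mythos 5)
Your proposal is correct and follows essentially the same route as the paper: both reduce the theorem to the equivalence $(3)\Leftrightarrow(4)$ (citing the preceding proposition and Remark \ref{remark all the same} for the rest), obtain $(3)\Rightarrow(4)$ from the fact that each $E_n(\I)$ is a nonzero bi-invariant ideal, and obtain $(4)\Rightarrow(3)$ by inflating a nontrivial bi-invariant ideal $\J$ to the Fourier-invariant ideal $\J\rtimes_{\beta_\infty}\bbZ$. The only divergences are cosmetic and harmless: you conclude $\I$ is everything from $e\in E_0(\I)\subseteq\I$ where the paper invokes C\'{e}saro summability, and you certify properness of $\J\rtimes_{\beta_\infty}\bbZ$ via the quotient crossed product where the paper compares fixed-point algebras of the gauge action.
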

\begin{proof}
If $(\C,\ga)$ is minimal, then $\ga$ is injective, otherwise $\ker\ga$ would be an $\ga$-invariant ideal. Hence, $T=(0)$ which gives that $(\B,\beta)=(\C,\ga)$, thus $(\B,\beta)$ is minimal.

Conversely, assume that $(\B,\beta)$ is minimal. Then $T=(0)$, since $T$ is $\beta$-invariant and cannot be equal to $\B$. Thus $(\B,\beta)=(\C,\ga)$, hence $(\C,\ga)$ is minimal. So $\ga$ is injective, otherwise $\M(\ker\ga)\neq (0)$ which leads to the contradiction $T\neq (0)$.

Assume that $(\C,\ga)$ is minimal (hence $\ga$ is injective), and let $\J\neq (0)$ be a $\ga_\infty$-invariant ideal in $\C_\infty$. Then $\J$ has non-trivial intersection with $\C$. Indeed, assume that $\J \cap \C =(0)$ and let $0\neq c\in \J \cap \C_{n_0}$ (if there is not such an $n_0$ then $\J=(0)$). Then
\begin{align*}
\ga_\infty^{n_0}(c) \in \ga_\infty^{n_0}(\J) \cap \ga_\infty^{n_0}(\C_{n_0})  \subseteq \J \cap \C = (0).
\end{align*}
Hence, $c\in \ker\ga_\infty^{n_0}$, which leads to $\ker\ga_\infty \neq (0)$, a contradiction. Now, let $I= \J \cap \C \neq (0)$ which is an ideal in $\C$; then
\begin{align*}
\ga(I)
=
\ga_\infty(I)
\subseteq
\ga_\infty(\J) \cap \ga_\infty(\C)
\subseteq
\J \cap \C
=
I,
\end{align*}
hence, $I$ is a non-zero $\ga$-invariant ideal of $\C$. Thus $I=\C$, therefore $e_{\C_\infty}=e_\C \in I \subseteq \J$. So $\J=\C_\infty$.

To end the proof, assume that $(\B_\infty,\beta_\infty)$ is minimal, and let $J\neq(0)$ be a $\beta$-invariant ideal in $\B$. Let $\J_\infty$ be the $\ca$-subalgebra of $\B_\infty$ defined by the directed system
\begin{align*}
J \stackrel{\beta}{\longrightarrow} J \stackrel{\beta}{\longrightarrow} J \stackrel{\beta}{\longrightarrow} \dots.
\end{align*}
It is well defined because $\beta$ restricts to an injective $*$-endomorphism of $J$. It is easy to see that $\J_\infty$ is a non zero ideal in $\B_\infty$ and, moreover, that is $\beta_\infty$-invariant. Hence $\J_\infty=\B_\infty$. Then $J=\J_\infty \cap \B=\B$.

Finally the equivalence $[(3) \Leftrightarrow (4)]$ is a standard result for crossed products.
\end{proof}

\begin{remark}\label{simplicity remark}
\textup{It is immediate that simplicity of the crossed product implies minimality of the dynamical system. But there is no hope of proving the converse in general. As Davidson and Roydor point out in \cite[Remark 5.12]{DavR} if we consider the minimal dynamical system $(\C,\ga)$ where $\C=\oC(\{x\})$ and $\ga=\id$, then $\B_\infty\rtimes_{\beta_\infty} \bbZ\simeq \C \rtimes_{\id} \bbZ \simeq \oC(\bbT)$ which is not simple. To go even further, assume that we are given the dynamical system $(\C,\id)$, where $\C$ is simple (thus the dynamical system is bi-minimal). Then $\C \rtimes_{\id} \bbZ$ is isomorphic to $\C \widehat{\otimes} \oC(\bbT)$. Hence, given a non-trivial ideal $J \vartriangleleft \oC(\bbT)$ we have that $\C \otimes J$ is a non-trivial ideal in the crossed product.}

\textup{On the other hand, there are well known dynamical systems with $*$-automorphisms, such that the crossed product they produce is a simple $\ca$-algebra. An example is $\A_\theta$ with $\theta \in \bbR \setminus \bbQ$ (see \cite[Theorem VIII.3.9]{Dav96}).}
\end{remark}

Let us drop to the case of commutative dynamical systems. So, let $X$ be a compact Hausdorff space and $\phi\colon X \rightarrow X$ a continuous map. For simplicity we write $(X,\phi)$ instead of $(\oC(X),\ga)$, where $\ga(f):=f\circ \phi$.

Assume that $\phi$ is not surjective, i.e. $\ga$ is not injective, then we can add the tail we produced in Section \ref{left}. We use notation as in \cite{DavR}. Define $U= X \setminus \phi(X)$, $T=\{(u,k):u \in \overline{U}, k<0\}$ and $X^T=X \sqcup T$. Then the continuous mapping $\phi^T\colon X^T \rightarrow X^T$ with $\phi^T|_X=\phi$ and
\begin{align*}
\phi^T(u,k)=(u,k+1), \text{ for } k<-1, \text{ and } \phi^T(u,-1)=u
\end{align*}
is surjective. Moreover, $(X^T,\phi^T)$ is the dynamical system $(\B,\beta)$ we construct in Section \ref{left} for $(\oC(X),\ga)$.

When $\phi$ is surjective, we can use the projective limit described in Remark \ref{Peters} and get the dynamical system $(\tX,\tphi)$.

\begin{theorem}\label{simplicity}
Let $(X,\phi)$ be a dynamical system, where $X$ is a compact Haudorff space. Minimality implies surjectivity and the following are equivalent:
\begin{enumerate}
\item $(X,\phi)$ is minimal and $X$ is infinite,
\item $(\tX,\tphi)$ is minimal and $\tX$ is infinite,
\item the crossed product $\oC(\tX)\rtimes_{\tphi} \bbZ$ is simple.
\end{enumerate}
If any of the previous conditions hold, the semicrossed products we have defined with respect to the collections $\F_{t,l}$, for $t=1,2,3,4$, are completely isometrically isomorphic to one another and share the same $\ca$-envelope $\oC(\tX)\rtimes_{\tphi} \bbZ$.
\end{theorem}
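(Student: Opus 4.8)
The plan is to reduce everything to the results already available for general systems (Theorem~\ref{minimal theorem}) together with the classical simplicity criterion for crossed products by a single homeomorphism, using Peters' identification from Remark~\ref{Peters}. First I would dispose of the preliminary assertion that minimality forces surjectivity. If $\phi$ is not surjective, then $\phi(X)$ is a proper compact subset, so $U=X\setminus\phi(X)$ is nonempty and open, and $\ker\ga=\{f\in C(X): f|_{\phi(X)}=0\}$ is a non-trivial proper ideal; it is $\ga$-invariant because $f|_{\phi(X)}=0$ forces $(f\circ\phi)|_{\phi(X)}=0$. This contradicts minimality. Consequently, whenever any of the three conditions holds we are in the surjective regime: either minimality forces $\phi$ onto, or the very definition of $(\tX,\tphi)$ presupposes it. In that regime $\ga$ is injective, $\B=\C=C(X)$, and by Remark~\ref{Peters} the system $(\tX,\tphi)$ is topologically conjugate to $(\C_\infty,\ga_\infty)=(\B_\infty,\beta_\infty)$, so that $C(\tX)\rtimes_{\tphi}\bbZ\simeq \C_\infty\rtimes_{\ga_\infty}\bbZ=\B_\infty\rtimes_{\beta_\infty}\bbZ$.

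Next I would prove $(1)\Leftrightarrow(2)$ by separating the minimality and cardinality clauses. For minimality, Theorem~\ref{minimal theorem} gives that $(X,\phi)=(\C,\ga)$ is minimal if and only if $(\B_\infty,\beta_\infty)$ is bi-minimal; since in this regime $\beta_\infty=\ga_\infty$ is a *-automorphism (equivalently $\tphi$ is a homeomorphism), Remark~\ref{unital for minimality} identifies bi-minimality with minimality, and the conjugacy of the first paragraph turns this into minimality of $(\tX,\tphi)$. For the cardinality clause, the projection $p\colon\tX\to X$ of Remark~\ref{Peters} is continuous and surjective, so $\tX$ finite forces $X$ finite; conversely, if $X$ is finite then surjectivity of $\phi$ makes $\phi$ bijective, whence each coordinate determines the whole compatible sequence and $\tX$ is in bijection with $X$, so $\tX$ is finite. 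Thus $X$ is infinite if and only if $\tX$ is infinite, completing $(1)\Leftrightarrow(2)$.

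The equivalence $(2)\Leftrightarrow(3)$ is the classical characterization of simplicity of a $\bbZ$-crossed product by a single homeomorphism of a compact Hausdorff space. For $(3)\Rightarrow(2)$, a proper closed $\tphi$-invariant set would produce a non-trivial invariant ideal of $C(\tX)\rtimes_{\tphi}\bbZ$; and if $\tX$ were finite, minimality would make it a single periodic orbit, for which the crossed product is a matrix algebra over $C(\bbT)$ and hence not simple. So simplicity forces both minimality and infiniteness. For $(2)\Rightarrow(3)$, if $(\tX,\tphi)$ is minimal and $\tX$ is infinite, then $\tphi$ has no periodic points---a periodic orbit would be a finite closed invariant set, equal to $\tX$ by minimality, contradicting infiniteness---so the action is free, and the standard simplicity criterion for crossed products by a minimal, topologically free $\bbZ$-action (see \cite{ArchSpiel93,KawTom90,Rea91,Sie09}) yields simplicity. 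I expect this direction $(2)\Rightarrow(3)$ to be the main obstacle, since it rests on the simplicity theorem; rather than reprove it I would invoke the cited results, the point being that the commutative hypotheses (minimality together with $\tX$ infinite) are exactly what guarantee the freeness needed to apply them.

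Finally, the concluding statement follows at once. Under any of the equivalent conditions $\ga$ is injective with $\B=\C$, so the ``Moreover'' part of Theorem~\ref{minimal theorem} applies and the four semicrossed products associated to the collections $\F_{t,l}$, $t=1,2,3,4$, are completely isometrically isomorphic to one another and share the $\ca$-envelope $\C_\infty\rtimes_{\ga_\infty}\bbZ$, which by the first paragraph is *-isomorphic to $C(\tX)\rtimes_{\tphi}\bbZ$.
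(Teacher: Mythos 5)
Your proposal is correct and follows essentially the same route as the paper: $(1)\Leftrightarrow(2)$ via Theorem \ref{minimal theorem} together with Peters' conjugacy of $(\tX,\tphi)$ with $(\C_\infty,\ga_\infty)$, and $(2)\Leftrightarrow(3)$ via the classical simplicity criterion for minimal, topologically free $\bbZ$-actions, with the concluding statement read off from the ``Moreover'' part of Theorem \ref{minimal theorem}. The only local difference is that to rule out finite $\tX$ in $(3)\Rightarrow(2)$ you identify the crossed product of a finite minimal system as a matrix algebra over $C(\bbT)$, whereas the paper derives a contradiction with the density condition of topological freeness; both arguments work, and you also fill in details (minimality implies surjectivity, and $X$ infinite iff $\tX$ infinite) that the paper only asserts.
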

\begin{proof}
Note that $X$ is infinite if and only if $\tX$ is infinite. So, Theorem \ref{minimal theorem} gives the equivalence $[(1)\Leftrightarrow (2)]$. Also $[(2)\Rightarrow (3)]$ is \cite[Theorem VIII.3.9]{Dav96}. To finish the proof, assume that $\oC(\tX)\rtimes_{\tphi} \bbZ$ is simple. Then by \cite[Corollary]{ArchSpiel93} $(\tX,\tphi)$ is minimal and topologically free, i.e. for every $0\neq k \in \bbZ$ the open set $\{x\in \tX: \tphi^k(x)\neq x\}$ is dense in $\tX$ (see \cite[Definition 1]{ArchSpiel93} and the remarks following it). If $\tX$ were finite, then by the pigeonhole principle there is at least one periodic point $x_0\in \tX$, say with period $m$. Then $x_0 \notin \{x\in \tX: \tphi^m(x)\neq x\}$, and since $\{x\in \tX: \tphi^m(x)\neq x\}$ is dense in $\tX$, thus nonempty, we can assume that $\{x\in \tX: \tphi^m(x)\neq x\}= \{x_1,\dots,x_l\} $. Since $X$ is Hausdorff there is an open neighborhood $U$ of $x_0$ such that none of $x_1, \dots, x_l$ is in $U$. Hence, $\{x\in \tX: \tphi^m(x)\neq x\}$ is not dense in $\tX$, which is a contradiction. Thus $\tX$ is infinite and the proof is complete.
\end{proof}

\begin{remark}\label{semisimple}
\textup{Simplicity of the $\ca$-envelope induces semi-simplicity of the semicrossed product (see \cite[Proposition 3]{Petarx}), but the converse is false. For example, assume $X=\bbT$ and $\phi\colon \bbT \rightarrow \bbT$ be rotation by a rational angle $\theta$. It is obvious that $\phi$ is surjective, thus the semicrossed products are completely isometrically isomorphic. Every point in $\bbT$ is recurrent, hence by \cite[Theorem 10]{DonsKatMan01} the semicrossed products are semisimple. But $\A_\theta$ is not simple.}
\end{remark}

Theorem \ref{simplicity} provides the identification of all ideals of any $\ca$-cover of the semicrossed products as boundary ideals.

\begin{corollary}
Let $(X,\phi)$ be a dynamical system, where $X$ is a compact Haudorff space. If any of $(1)-(3)$ of Theorem \ref{simplicity} holds, then the semicrossed products are u.c.is.is, and if $\I$ is an ideal in a $\ca$-cover $\fC$ of the semicrossed product, then it is boundary.
\end{corollary}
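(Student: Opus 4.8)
The first assertion is immediate: under any of $(1)$--$(3)$, Theorem \ref{simplicity} already gives that the four semicrossed products are completely isometrically isomorphic, so there is a well-defined unital operator algebra $\A$ (``the semicrossed product'') whose $\ca$-envelope is $\ca_e=\C(\tX)\rtimes_{\tphi}\bbZ$, and the latter is a \emph{simple} $\ca$-algebra. For the second assertion the plan is to reduce the boundary condition to an inclusion of ideals. Fix a $\ca$-cover $(\fC,\gi)$ of $\A$ and write $\J_\A\lhd\fC$ for the \v{S}ilov ideal, so that $\fC/\J_\A\cong\ca_e$ and $\J_\A$ is the \emph{largest} boundary ideal; consequently an ideal $\I\lhd\fC$ is boundary if, and only if, $\I\subseteq\J_\A$. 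Since $\gi(e)$ is a norm-one idempotent it is a projection acting as a unit on every generator and its adjoint, so $\fC$ is unital and $\J_\A$ is a maximal ideal (its quotient is the simple algebra $\ca_e$). Thus the whole statement comes down to proving that \emph{every} proper ideal of $\fC$ is contained in $\J_\A$, i.e.\ that $\J_\A$ is the unique maximal ideal of each $\ca$-cover.

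Let $\Phi\colon\fC\to\ca_e$ be the canonical $\ca$-epimorphism, with $\ker\Phi=\J_\A$ and $\Phi\circ\gi=\iota$, where $\iota\colon\A\to\ca_e$ is the completely isometric embedding of $\A$ in its envelope. Given a proper ideal $\I$, the image $\Phi(\I)$ is an ideal of the \emph{simple} algebra $\ca_e$, so either $\Phi(\I)=(0)$ or $\Phi(\I)=\ca_e$. In the first case $\I\subseteq\ker\Phi=\J_\A$ and $\I$ is boundary. The entire content of the corollary is therefore to exclude the second case for proper $\I$; this is the step I expect to be the main obstacle. One cannot dispose of it by formal ring theory: if $\Phi(\I)=\ca_e$ then $\I+\J_\A=\fC$, and the $\ca$-algebraic Chinese Remainder Theorem yields $\fC/(\I\cap\J_\A)\cong(\fC/\I)\oplus\ca_e$, while the generation of $\fC$ by $\gi(\A)$ produces no contradiction at the purely algebraic level. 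An analytic input is needed.

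The remaining task---excluding $\Phi(\I)=\ca_e$ for every proper $\I$---is precisely the assertion that $\A$ is \emph{hyperrigid} in $\ca_e$, and I would prove it directly from the dynamics. Realizing $\A$ inside $\ca_e$ as the closed span of the monomials $U^n\iota(f)$ ($n\ge 0$, $f\in\C(\tX)$), one has that $\A$ contains both $\C(\tX)$ and the unitary $U$ and that $\A+\A^*$ is dense in $\ca_e$. Using the gauge action $\{\beta_z\}_{z\in\bbT}$, the Fourier projections $E_n$, and the covariance relation relating $U$ to $\iota(\C(\tX))$, I would show that any unital completely positive map $\psi\colon\ca_e\to\ca_e$ with $\psi|_{\iota(\A)}=\id$ must commute with the gauge action and be multiplicative on $U$ and on $\C(\tX)$, hence be a $*$-representation; this is Arveson's unique extension property, and it yields hyperrigidity, which is exactly the statement that $\J_\A$ is the unique maximal ideal of every $\ca$-cover. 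With this in hand $\Phi(\I)=\ca_e$ is impossible for proper $\I$, so $\Phi(\I)=(0)$, every proper ideal lies in $\J_\A$, and every ideal of every $\ca$-cover is boundary. This last, thoroughly system-dependent verification of the unique extension property---where all of the minimality of $(\tX,\tphi)$ is consumed---is the part I expect to be genuinely hard; the reductions in the preceding paragraphs are routine.
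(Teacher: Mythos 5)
Your reduction of ``every ideal is boundary'' to ``every proper ideal of $\fC$ is contained in the \v{S}ilov ideal $\J_\A$'' is correct, and you have put your finger on exactly the point that the paper's own two-line proof passes over: the printed argument takes an arbitrary ideal $\I$, notes that $\ca_e=C(\tX)\rtimes_{\tphi}\bbZ$ is simple, and simply asserts $\Phi(\I)=(0)$, whereas simplicity only yields the dichotomy $\Phi(\I)=(0)$ or $\Phi(\I)=\ca_e$. Up to that point your analysis is sharper than the paper's, and your remark that no formal ring-theoretic argument can dispose of the second alternative is also right.

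The proposed way of closing the gap, however, does not work. First, hyperrigidity of $\A$ in $\ca_e$ concerns unique UCP extensions of restrictions of $*$-representations of $\ca_e$ itself; it is neither equivalent to nor does it imply that $\J_\A$ is the unique maximal ideal of every $\ca$-cover. (It is also not the hard part here: $\A$ contains the unitary $\gd_1\otimes e$ and the $\ca$-subalgebra $C(\tX)$, so the multiplicative-domain argument gives the unique extension property for every representation of $\ca_e$, with no input from minimality.) Second, and decisively, the alternative $\Phi(\I)=\ca_e$ genuinely occurs, so no argument can exclude it: for any $x_0\in \tX$ the pair $(\mathrm{ev}_{x_0},0)$ is a left covariant \emph{contractive} pair, hence induces a character $\rho$ of the semicrossed product with $\rho(\gd_0\otimes f)=f(x_0)$ and $\rho(\gd_1\otimes e)=0$. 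Then $\rho\oplus\gi$ is a completely isometric homomorphism of $\A$ into $\bbC\oplus\ca_e$ whose range generates (since $(0,U)^*(0,U)=(0,1)$ produces the two central projections), so $\fC=\bbC\oplus\ca_e$ is a $\ca$-cover with \v{S}ilov ideal $\bbC\oplus(0)$, and the proper ideal $\I=(0)\oplus\ca_e$ satisfies $\Phi(\I)=\ca_e$ and is \emph{not} boundary, because the quotient by $\I$ is $\rho$, which annihilates $\gd_1\otimes e$. So the step you correctly isolate as the crux is not merely hard: it fails in the stated generality, and what the paper's argument actually establishes is only that ideals contained in $\ker\Phi$ are boundary. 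Your proposal should not be completed as written; the productive outcome of your analysis is that the corollary needs a restriction on $\I$ (or on the covers considered) before either proof can go through.
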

\begin{proof}
Let $\I \lhd \fC$ and the $*$-epimorphism $\Phi\colon \fC \rightarrow \oC(\tX)\rtimes_{\tphi} \bbZ$. Since the $\ca$-envelope is simple, then $\Phi(\I)=(0)$, hence $\I \subseteq \ker\Phi$; thus it is a boundary ideal for the semicrossed product, since $\ker\Phi$ is the \v{S}ilov ideal, i.e. the biggest boundary ideal.
\end{proof}

\begin{question}
\textup{Note that for the proof of Theorem \ref{simplicity} we have used \cite[Corollary]{ArchSpiel93}, which holds for commutative dynamical systems. For non- commutative and automorphic dynamical systems there exist criterias that lead to simplicity of the crossed product. Hence, the natural question that is raised here is if there is an analogue of Theorem \ref{simplicity} for non-commutative dynamical systems $(\C,\ga)$ at least when the spectrum $\widehat{\C}$ is Hausdorff. The first step to that direction would be to answer the following question:}

\begin{quote}
\noindent Let $(\C,\ga)$ be a unital dynamical system such that the spectrum $\widehat{\C}$ is a Hausdorff space. Then $(\C,\ga)$ is minimal and topologically free if, and only if, $(\C_\infty,\ga_\infty)$ is minimal and topologically free.
\end{quote}

\textup{Recall that an arbitrary dynamical system $(\C,\ga)$ is called \emph{topological free} if for any $n_1,\dots,n_k \in \bbZ_+\setminus\{0\}$, $\cap_{i=1}^k \{x\in \widehat{\C}: x\circ \ga^{n_i} \neq x\}$ is dense in $\widehat{\C}$. When $\widehat{\C}$ is Hausdorff this is equivalent to saying that $\{x\in \widehat{\C}: x\circ \ga^{n} = x\}$ has empty interior for any $n \geq 1$ (by the remarks following \cite[Proposition 1]{ArchSpiel93}). In the case where $\ga$ is a $*$-automorphism, this is equivalent to the usual topological freeness \cite[Definition 1]{ArchSpiel93}, since $\{x\in \widehat{\C}: x\circ \ga^{n} \neq x\}= \{x\in \widehat{\C}: x\circ \ga^{-n} \neq x\}$ for any $n\geq 1$.}
\end{question}

\begin{acknow}
\textup{I wish to give my sincere thanks to E. Katsoulis for his helpful remarks and advices. I also wish to thank A. Katavolos for his kind help and advice during the preparation of this paper. Finally, I wish to thank Judy Vs. for the support and inspiration.}
\end{acknow}


\end{document}